\crefname{hypothesis}{Hypothesis}{Hypotheses}
\title{An Iterative Block Matrix Inversion (IBMI) Algorithm for Symmetric Positive Definite Matrices with Applications to Covariance Matrices\thanks{Submitted to the editors DATE.
\funding{Ann Paterson was funded by a University of Strathclyde International Strategic Partner (ISP) Research Studentship and the National Manufacturing Institute Scotland.}}}
\author{\textbf{Ann Paterson} \thanks{Department of  Mathematics and Statistics, University of Strathclyde, 
(\email{ann.paterson.2017@uni.strath.ac.uk}, 
}
\and Jennifer Pestana \thanks{Department of  Mathematics and Statistics, University of Strathclyde, 
(\email{jennifer.pestana@strath.ac.uk}}
\and Victorita Dolean 
\thanks{Department of Mathematics and Computer Science, Eindhoven University of Technology, 
  (\email {v.dolean.maini@tue.nl}}}
\newcommand*{\addFileDependency}[1]{
  \typeout{(#1)}
  \@addtofilelist{#1}
  \IfFileExists{#1}{}{\typeout{No file #1.}}
}
\newcommand*{\myexternaldocument}[1]{%
    \externaldocument{#1}%
    \addFileDependency{#1.tex}%
    \addFileDependency{#1.aux}%
}
\theoremstyle{plain}
\newcommand{\sig}{\mathcal{H}} 
\let\b\mathbf
\newcommand{\I}{\mathcal{I}}
\newcommand{\Q}{\mathcal{A}}
\newcommand{\Perm}{\mathcal{P}}
\newcommand{\z}{\mathbf{z}}
\newcommand{\tsig}{\tilde{\mathcal{H}}} 
\Crefname{ALC@unique}{Line}{Lines} 
\newcommand{\one}[1]{\textcolor{black}{#1}}
\newcommand{\two}[1]{\textcolor{black}{#1}}
\begin{document}
\maketitle
\begin{abstract}
Obtaining the inverse of a large symmetric positive definite matrix $\mathcal{A}\in\mathbb{R}^{p\times p}$ is a continual challenge across many mathematical disciplines. The computational complexity associated with direct methods can be prohibitively expensive, making it infeasible to compute the inverse. In this paper, we present a novel iterative algorithm (IBMI), which is designed to approximate the inverse of a large, dense, symmetric positive definite matrix. The matrix is first partitioned into blocks, and an iterative process using block matrix inversion is repeated until the matrix approximation reaches a satisfactory level of accuracy. We demonstrate that the two-block, non-overlapping approach converges for any positive definite matrix, while numerical results provide strong evidence that the multi-block, overlapping approach also converges for such matrices.

\end{abstract}

\begin{keywords}
  symmetric positive definite matrix, block matrix inversion, covariance matrix 
\end{keywords}
\begin{AMS}
65F05  15A09
\end{AMS}

\section{Introduction} \label{sec:lit review}
Finding the inverse of a large, symmetric positive definite matrix is crucial in various fields such as Bayesian inference \cite{Rue_2023}, computational physics \cite{FIND.ALG}, and medical imaging \cite{medical_imaging}. The difficulty in obtaining the inverse of a symmetric positive definite matrix $\Q \in \mathbb{R}^{p \times p}$, where $\sig=\Q^{-1}$, lies in the computational expense of doing so. Direct inversion techniques, such as those based on Gaussian elimination, can require $\mathcal{O}(p^3)$ flops and have a $\mathcal{O}(p^2)$ storage cost \cite[\textsection3.11]{Duff_chpt3}, making it infeasible to calculate the direct inverse for larger matrices. 
%
One well-known method to invert a (dense) symmetric positive definite matrix is to use the Cholesky factorisation to decompose a matrix $\Q$ into the product of lower triangular matrices $\Q= \b{L} \b{L}^{\top}$. Then, $\Q^{-1}$ is obtained by first solving the $p$ linear systems $\b{L} \b{z}_i = \b{e}_i$, where $\b{e}_i$ is the $i$th unit vector, and then solving $\b{L}^\top\b{h}_i = \b{z}_i$, where $\b{h}_i$ is the $i$th column of $\sig = \Q^{-1}$. 
These three steps to obtain $\sig$ can be combined into one sweep as described in \cite{SPD_inversion}. 
\\ \\ 
Alternatively, $p$ linear systems could be solved using a method such as the preconditioned conjugate gradient (PCG) method, which can solve large symmetric positive definite linear systems of the form $\Q\b{x} = \b{b}$. For dense matrices that can be represented using a hierarchical low-rank format, with invertible diagonal blocks, it is also possible to approximate the entire inverse (see, e.g., \cite[\textsection  2.8]{bebendorf}). 
\\ \\
\two{For well-conditioned matrices, the Newton--Schultz iteration can be used to compute the inverse of a matrix. If $\Q \in \mathbb{R}^{p \times p}$ is an invertible matrix, then $\Q^{-1}$ is found using the following recurrence equation based on Newton's method \cite[\textsection 4.1]{Householder_1965},
\begin{equation} \label{eq:newton}
    \b{X}_{n+1} = \b{X}_{n} \left(2\b{I}- \Q\b{X}_{n} \right),
\end{equation}
where \cref{eq:newton} seeks to satisfy $\Q\b{X} = \b{I}$. The iteration \cref{eq:newton} will converge if $\| \b{I}-\Q\b{X}_{0}\| < 1$, where $\b{X}_{0}$ is the initial guess and $\| \cdot \|$ is an appropriate matrix norm \cite{Pan_newton_inverse}. For symmetric positive definite matrices, the Newton--Schultz method will converge if $\rho(\b{I}-\Q\b{X}_{0}) <1$. However, if $\Q$ is ill-conditioned or $\b{X}_0$ does not approximate $\Q^{-1}$ well, then the iterates may fail to converge to $\Q^{-1}$. }
\\ \\ 
There also exist several methods to obtain the full (or partial) inverse of a large \textit{sparse} symmetric positive definite matrix. In 1973, Takahashi et al.\ derived a method for sparse matrix inversion \cite{TAKAHASHI}, that was further analysed by Erisman and Tinney \cite{Erisman.Taka}. The starting point for the method is the observation that, given a symmetric, non-singular matrix $\Q \in \mathbb{R}^{p \times p}$ and its $\b{LDL}^{\top}$ factorisation $\Q = \b{LDL}^{\top}$, the inverse satisfies:
\begin{align} \label{eq:taka1}
    \sig &= \b{D}^{-1}\b{L}^{-1} + \left(\b{I}-  \b{L}^{\top} \right) \sig.
\end{align}
The key observation is that $\b{I}- \b{L}^{\top}$ involves only the upper triangular part of $\Q$. Thus, if we wish to compute elements $\sig_{ij}$, $i\le j$ in the upper triangular part of $\sig$ (which, since $\Q$ is symmetric, also computes elements $\sig_{ji}$ in the lower triangular part), we can work with triangular matrices only. 
This leads to the recursive formula: 
\begin{equation} \label{eq:taka2}
    h_{ij} = d_{ij}^{-1} - \sum_{k>i}^{n} l_{ki}h_{kj} \ \text{ for} \ i\leq j,
\end{equation}
for elements of $\sig=\Q^{-1}$. By ordering $\Q$ such that the desired elements of $\sig$ will occur in its lower-right corner, we can compute these desired elements with fewer computations. 
The computational cost also depends on the sparsity of $\b{L}$, since we may find that many $l_{ik}=0$. We note that Rue and Martino \cite{RUE20073177} generalise the Takahashi recurrences to enable them to compute the marginal variances for Gaussian Markov random fields (GMRFs) with additional constraints. 
\\ \\ 
\two{
Within the field of optimisation, the inverse of a symmetric positive definite Hessian is needed, particularly for second-order methods. In stochastic optimisation the inverse of such a matrix is used to determine the parameter $\theta$ in the following problem \cite{stochastic_opti}:
$\theta = \arg\min_{h \in \mathbb{R}^h} G(h), \ \text{where}  \ \ G = \mathbb{E}\left[g(X,h) \right].$ The convex function  $G : \mathbb{R}^d \rightarrow \mathbb{R}$ is defined in terms of the twice differentiable function $g$ and a random vector $X$. 
The exact inverse of the Hessian $\mathcal{H} =\nabla^2G$ is usually unavailable, and must be approximated. One method approximates $\mathcal{H}^{-1}$ using an algorithm based on the recursive Robbins--Monro method, then employs a truncation-based step to ensure positive definiteness.}
\\ \\
Obtaining the covariance matrix from its inverse, also known as the precision matrix, is a well-known challenge within multivariate statistics. The covariance matrix is a dense symmetric positive definite matrix $\sig \in \mathbb{R}^{p\times p}$, unlike its inverse, the precision matrix, $\Q \in \mathbb{R}^{p\times p}$ which is often sparse. If only the diagonal of $\sig$ is required, Hutchinson's stochastic estimator \cite{Hutchinson} can be applied: 
\begin{equation} \label{Hutch}
    \text{diag}(\sig) \approx  \left[ \sum_{k=1}^{K} \z_{k} \odot \mathcal{H} \ \z_{k}  \right] \oslash  \left[ \sum_{k=1}^{K} \z_{k} \odot \z_{k} \right],
\end{equation}
where elements of the random vectors $\z_{k}$, for  $k\in\{1,\ldots,K\}$, take the value $1$ or $-1$ with equal probability. Here, $\odot$ represents element-wise multiplication (the Hadamard product) of the vectors, and $\oslash$ represents their element-wise division. 
\\ \\ 
The full covariance matrix $\sig = \Q^{-1}  \in \mathbb{R}^{p\times p}$ can be approximated using a Monte Carlo method that first computes $N_s$ samples $\z_k \sim \mathcal{N}(0,\Q^{-1})$, $k = 1,\dotsc, N_s$ using, e.g., the approaches in \cite{ChowandSaad, pap_yuille_2010, papandyuille}. These samples are then used to form the Monte Carlo estimator mentioned in \cite{papandyuille}, which has the standard Monte Carlo convergence rate of $\mathcal{O}(N_s^{\frac{1}{2}})$:
\begin{equation} \label{MC Estimators}
    \b{\hat{\mathcal{H}}_{MC}} = \frac{1}{N_{s}}\sum_{j=1}^{N_{s}} \b{z}^{j}\b{z}^{j\top}  =\frac{1}{N_{s}}\b{ZZ}^\top, 
    \qquad 
    \b{Z} =[\z_1,\z_2, \ldots, \z_{N_s}].
\end{equation}
In 2018, Sidén et al. \cite{Finn} developed three Rao-Blackwellized Monte Carlo (RBMC) estimators for approximating elements of $\sig$ that improve on \eqref{MC Estimators} by combining it with the Law of Total Variance. 
One of these, the Block RBMC estimator,  approximates a principal sub-matrix of $\sig$. The block estimator requires two sets $\I$ and $\I^c$ that partition the row/column indices of $\Q \in \mathbb{R}^{p\times p}$, i.e., $ \I \cup \I^c = \{1, \ldots, p\}$, $\I\cap \I^c = \emptyset$. The matrix $\hat{\sig}_{\I}$ is then defined to be the principal sub-matrix of the approximate inverse, corresponding to the elements in the rows and columns indexed in the set $\I$. The block RBMC estimator is then defined as: 
\begin{align}
\hat{\sig}_{\I} &\approx \Q_{\I}^{-1}+\frac{1}{N_s} \Q_{\I}^{-1} \Q_{\I, \I^{c}} \b{Z}_{\I^{c}} \left(\b{Z}_{\I^c}\right)^{\top} \! \!  \left(\Q_{\I,\I^c}\right)^{\top} \Q_{\I}^{-1}. \label{eq:block RBMC} \vspace{-0.20cm}
\end{align}
As for the simple Monte Carlo estimator \eqref{MC Estimators}, here $N_s$ is the number of Gaussian samples $\z_k \sim \mathcal{N}(0,\Q^{-1})$, while $\b{Z}_{\I^{c}}$ represents the sub-matrix of $\b{Z}$ in \eqref{MC Estimators} formed from the rows indexed by $\I^{c}$. 
When $|\I|=1$, the Block RBMC estimator becomes the simple RBMC estimator described in \cite{Finn}, which can compute one marginal variance. The authors also describe an iterative interface method, based on the Block RBMC estimator in \cite{Finn}, that can more accurately approximate the diagonal of $\sig$ than Hutchison's estimator in \eqref{Hutch} but at a higher computational cost. The iterative interface method is designed to compute selected elements of the covariance matrix, but it cannot approximate all elements of $\sig$ simultaneously \cite[\textsection 3.2.2]{Finn}. 
\\ \\
Zhumekenov et al., \cite{Rue_2023} presented an alternative method of selected inversion for spatio-temporal Gaussian Markov random fields (GMRFs) which includes recovering the marginal variances starting from the precision matrix. Their method is a hybrid approach, taking inspiration from the RBMC estimators from Sidén et al. \cite{Finn}, and Krylov subspace methods, which are becoming increasingly popular for solving large linear systems in multivariate statistics. 

\subsection{Main Contributions}
The existing literature provides various methods for computing selected elements of the inverse of a symmetric positive definite matrix. However, there is still a notable gap of approaches which can accurately and efficiently approximate a \textbf{full} inverse, as current methods are not able to accurately approximate all the off-diagonal elements.  
In this paper, we introduce the following contributions, which aim to reduce this gap.
\begin{itemize}
    \item \textbf{Novel iterative block matrix inversion algorithm (IBMI).} We advance the current literature by proposing a novel block matrix inversion algorithm, designed to efficiently approximate the \emph{whole} inverse of a dense symmetric positive definite matrix. Using the Block RBMC estimator as a starting point, we establish a link between \cref{eq:block RBMC} and block matrix inversion. A breakdown of how the algorithm iteratively updates the approximated inverse through block matrix inversion will be provided. Notably, our algorithm achieves an accurate approximation of the inverse not only for the principal sub-matrices, but also for the off-diagonal elements, addressing a significant limitation with current methods. 
    \item \textbf{Analysed convergence, cost, and error bound.} When $\Q$ is partitioned into two non-intersecting sets, the algorithm is guaranteed to converge for any symmetric positive definite matrix $\Q$. This has been shown both theoretically and numerically, and a bound is derived for the error after each iteration. When the algorithm is generalised to the multi-block overlapping case, numerical results show that the algorithm can also converge. Additionally, 
    we show that the algorithm can outperform direct methods such as MATLAB's inversion function (\texttt{inv}). This advantage is further explored in the breakdown of the cost of the algorithm, where we show when the algorithm converges in one iteration it can outperform direct methods in terms of complexity.

    \item \textbf{Applications.} The algorithm is applicable to any symmetric positive definite matrix. However, we choose to focus on covariance matrices when performing numerical experiments. This was motivated by 
    applications that require the inverse of a covariance matrix, known as the precision matrix, in multivariate statistics and data science e.g., Gaussian process regression \cite[\textsection 2]{GPR_application}. Some of the literature reviewed in \cref{sec:lit review} focussed on the 
    inversion of sparse symmetric positive definite matrices. The IBMI algorithm is a novel method which can obtain the inverse of both sparse and \emph{dense} symmetric positive definite matrices.
\end{itemize}
The paper is structured as follows; \cref{sec:background} details the novel iterative block matrix inversion algorithm. The convergence of the IBMI algorithm is proven in \cref{sec:conv} and the computational cost is discussed in \cref{sec:cost}. Numerical results in \cref{sec:graphs} will confirm theoretical findings and illustrate the performance of the IBMI algorithm on cases not covered by the theory. Finally, a discussion will conclude the paper in \cref{sec:conclusions}.



\section{An Iterative Algorithm for Matrix Inversion} \label{sec:background}
%
The motivation for, and development of, the iterative block matrix inversion algorithm (IBMI) will be detailed in this section. We first start by making the link between the Block RBMC estimator in \cref{eq:block RBMC} and block matrix inversion. Details of the IBMI algorithm will then be given, first for the simplest partitioning -- the two-block, non-overlapping case -- and then for the multi-block overlapping case. 

\subsection{\two{Approximate Block Matrix Inversion}}
\two{The Block RMBC estimator can be viewed in terms of approximate block matrix inversion, which is also the foundation of our IBMI algorithm. Hence, in this section we introduce the (approximate) block matrix inversion formula, and elucidate its connection to the block RBMC approach.} 
\\ \\ 
\two{We begin by describing the exact block matrix inversion formula. To this end,} recall the two index sets, $\I$ and $\I^c$,  from  \cref{eq:block RBMC} that partition the row and column indices of $\Q \in \mathbb{R}^{p\times p}$, and that satisfy $\I \cup \I^c = \{1, \ldots, p\}$, $\I\cap \I^c = \emptyset$. We permute the matrix $\Q\in\mathbb{R}^{p\times p}$ so that the rows and columns corresponding to indices in $\I$ appear first, and then partition this permuted matrix $\Perm\Q\Perm^\top$ as: 

\begin{equation} \Perm\Q \Perm^\top= {\begin{bmatrix} 
 \Q_{\I} & \Q_{\I,\I^c} \\ 
 \Q_{\I^c,\I} & \Q_{\I^c}
 \end{bmatrix}} , \qquad \text{where} \quad \I \cup \I^c = \{1,\dotsc,p\}.
\end{equation}
The matrix $\Q_{\I}$ has rows and columns indexed by $\I$, $\Q_{\I^c}$ has rows and columns indexed by $\I^c$, $\Q_{\I,\I^c}$ has rows indexed by $\I$ and columns by $\I^c$ and $\Q_{\I^c,\I}=\Q_{\I,\I^c}^\top$. Then, the well known block matrix inversion formula (see, e.g., \cite[pg.19]{SchurComplementTxbk}) gives: 
\vspace{-0.15cm}
\begin{equation}
\label{BMIQ}
\begin{aligned}
\Perm \!\Q^{-1} \! \Perm^\top& = \! 
\begin{bmatrix}     
    \Q_{\I}^{-1} \! + \! \Q_{\I}^{-1}\Q_{\I,\I^c} \sig_{\I^c} \left(\Q_{\I,\I^c}\right)^{\top} \! \Q_{\I}^{-1} & 
    -\Q_{\I}^{-1}\Q_{\I,\I^c} \sig_{\I^c} \\ 
    - \sig_{\I^c}\! \left(\Q_{\I,\I^c}\right)^{\top} \!\Q_{\I}^{-1} & 
    \sig_{\I^c}
    \end{bmatrix} \\
     & = \! 
    \begin{bmatrix}
        \sig_{\I} & \! \! \! \! \sig_{\I,\I^c} \\ 
         \sig_{\I^c,\I} & \! \! \! \!\sig_{\I^c}
    \end{bmatrix} \ = \ \Perm\sig\Perm^\top,
\end{aligned}
\end{equation}
where $\sig_{\I^c} =  (\Q_{\I^c} - (\Q_{\I,\I^c})^{\top} \Q_{\I}^{-1} \Q_{\I,\I^c})^{-1}$ is the inverse of the Schur complement. Inverse permutations can then be applied to recover $\sig=\Q^{-1}$. A link can now be made with the Block RBMC estimator, as the top left principal sub-matrix in \cref{BMIQ} looks almost equal to the Block RBMC estimator \cref{eq:block RBMC}, which can be rewritten as:
\begin{align*}
    \hat{\sig}_{\I} \approx  \Q_{\I}^{-1} + \Q_{\I}^{-1}\Q_{\I,\I^c} \tsig_{\I^c}\Q_{\I^c,\I} \Q_{\I}^{-1} \approx \sig_{\I}, 
    \qquad 
     \tsig_{\I^c}= \frac{1}{N_s}&\b{Z}_{\I^c} 
     (\b{Z}_{\I^c})^{\top}.
\end{align*}
Thus, by approximating the inverse of the Schur complement $\tsig_{\I^c}$, an approximation of the top left principal sub-matrix $\tsig_{\I}$, can be obtained. Crucially, approximations to the off-diagonal sub-matrices of the first matrix in  \eqref{BMIQ} can also be obtained without additional computations (because  $\Q_{\I}^{-1} \Q_{\I,\I^c} \tsig_{\I^c}$ is required to compute $\tsig_{\I}$) and an approximation of the complete matrix $\tsig$ can be obtained. 
The resulting approximated matrix $\tsig$ is:
\begin{align} \label{eq:IBMI}
    \Perm\tsig\Perm^\top = 
    \begin{bmatrix}
        \Q_{\I}^{-1}+ \Q_{\I}^{-1} \Q_{\I,\I^c} \tsig_{\I^c} \Q_{\I^c,\I}\Q_{\I} ^{-1} & 
         -\Q_{\I}^{-1} \  \Q_{\I,\I^c}\tsig_{\I^c}\\ 
         - \tsig_{\I^c}\Q_{\I^c,\I}\Q_{\I}^{-1} &  \tsig_{\I^c}
    \end{bmatrix}.
\end{align}
The Monte Carlo estimator in \eqref{MC Estimators} could be used for the Schur complement approximation $ \tsig_{\I^c}$, but this is certainly not the only choice. Other possible choices for the initial guess will be discussed at the end of \cref{sec:alg}.

\subsection{ \two{ The Two-Block Non-Overlapping Case}}\label{sec:two-block}
Numerical evidence suggests the approximation in \cref{eq:IBMI} may not be very accurate, as $|\tsig_{ij} - \sig_{ij}|$ $i,j = 1,\dotsc p$, may be large when $|i-j|$ is large, i.e., elements in the off-diagonal blocks may be poorly approximated. To measure this initial approximation, symmetric positive definite matrices were generated using the RBF covariance kernel (given in \cref{tab:kernels}, discussed in \cref{sec:graphs}) and the error of the first approximation was recorded using the error estimate in \cref{eq:error}. The smallest matrix, of dimension $p=2^6$, had an error of 0.856886. As the dimension of the matrix increased, the error increased linearly, and the largest matrix, of dimension $p=2^{14}$, had an error of 20.9872. This trend was consistent with other matrices tested. \two{This could be analogous to the solution of linear systems involving discretised PDE, where it is known that the transfer of information for far away points is usually slow.} \\ \\
This initial approximation can be improved by iteratively updating the matrix, as we describe in this section. The key idea involves choosing different sets of indices for $\I$, and applying the block matrix inversion formula in \eqref{eq:IBMI} using elements of the most recently computed $\tsig$ to approximate $\tsig_{\I^c}$. 
\\ \\ 
%
For simplicity, the two-block non-overlapping case for a matrix $\Q \in \mathbb{R}^{p\times p}$ will be discussed here. In this case, two non-intersecting sets, $\I_1$ and $\I_2$, are introduced, where $ \I_1 \cup \I_2 = \{1,2,\ldots, p\},\  \I_1 \cap \I_2 = \emptyset$. At each iteration, denoted $r = 1,2,\dotsc$,  we cycle through these two sets, with the current set indicated by $k \in \{1,2\}$. The notation $\tsig^{(r,k)}$ is used to keep count of the iteration and set, $r$ and $k$, when updating the approximated matrix $\tsig$. Additionally, permutation matrices are denoted by $\Perm_k \in \mathbb{R}^{p\times p}$, where $\Perm_k$ permutes the rows of a matrix so that those indexed by elements of $\I_k$ appear before those indexed by elements of $\I_k^{c}$.
\\ \\ %
\textbf{Iteration 1, Set 1.}
We first set the iteration index $r=1$. Then, the set index $k=1$ is used to determine $\I$ in \eqref{eq:IBMI}, i.e., we let $\I=\I_1$ and $\I^c=\I_1^c$. An initial guess is made for the inverse of the Schur complement, $ \tsig^{(0,1)}_{\I_1^c}$, \two{ with the superscript representing the first guess $r=0$, using the first set $k=1$. This} is substituted into \eqref{eq:IBMI} to give the first approximation: 
\begin{equation} \label{eq:iter1 set1}
\begin{aligned}
    \Perm_1\tsig^{(1, 1) } _{\I_1}\Perm_1^{\top} &= \begin{bmatrix}\Q^{-1}_{\I_1} \! +\! \Q^{-1}_{\I_1}  \Q_{\I_1,\I_1^c}  \boxed{{}{\tsig}^{(0,1)}_{\I_1^c}} \  \Q_{\I_1^c,\I_1} \Q_{\I_1}^{-1} &
- \Q_{\I_1}^{-1} \Q_{\I_1,\I_1^c}  \boxed{{} \tsig^{(0,1)}_{\I_1^c}} \ \\
- \ \boxed{{}\tsig^{(0,1)}_{\I_1^c}} \ \Q_{\I_1^c,\I_1} \Q_{\I_1}^{-1} &  \boxed{{}\tsig^{(0,1)}_{\I_1^c}} 
\end{bmatrix}\\
&=\begin{bmatrix}
\tsig_{\I_1}^{(1,1)} & \tsig_{\I_1, \I_1^{c}}^{(1,1)} \\
\tsig_{\I_1^{c},\I_1}^{(1,1)} & \tsig_{ \I_1^{c}}^{(0,1)} 
\end{bmatrix}
. 
\end{aligned}
\vspace{0.15cm} 
\end{equation}
Note that having just two, non-overlapping sets leads to the special case where $\I_1^{c}=\I_2$, and vice versa.  Hence, $\Q_{\I_1} \equiv \Q_{\I_2^c}$ and $\Q_{\I_2} \equiv \Q_{\I_1^c}$. Therefore $\Perm_1\tsig^{(1, 1) }_{\I_1}\Perm_1^{\top}$, can be re-written as: \vspace{-0.3cm}
\begin{equation*}
\Perm_1\tsig^{(1, 1)}_{\I_1}\Perm_1^{\top} = 
    \begin{bmatrix}
\tsig_{\I_1}^{(1,1)} & \tsig_{\I_1, \I_2}^{(1,1)} \\
\tsig_{\I_2,\I_1}^{(1,1)} & \tsig_{\I_2}^{(0,1)} 
\end{bmatrix}.
\end{equation*}
%
%
%
\textbf{Iteration 1, Set 2.} 
Now, set $k=2$, so that $\I=\I_2$ and $\I^c=\I_2^c=\I_1$ in \eqref{eq:IBMI}. 
Then, an updated approximation of the matrix $\sig$ is obtained from \cref{eq:IBMI} and the permutation matrix $\Perm_2 \in \mathbb{R}^{p\times p}$. However, instead of using the initial guess, $\tsig_{\I_2}^{(0,1)}$, as an approximation of the inverse of the Schur complement, as in the previous approximation, we set $\tsig_{\I^c_2} = \tsig_{\I_1}^{(1,1)}$ in \cref{eq:IBMI}. 
The updated matrix approximation using $\I_2$ is then, 
\begin{equation} 
\label{eq:2block2}
\begin{aligned}
\Perm_2\tsig^{(1,2)}_{\I_2}\Perm_2^\top &= \begin{bmatrix}\Q^{-1}_{\I_2} \! +\! \Q^{-1}_{\I_2}  \Q_{\I_2,\I_2^c}  \boxed{{}{\tsig}^{(1,1)}_{\I_1}} \  \Q_{\I_2^c,\I_2} \Q_{\I_2}^{-1} &
- \Q_{\I_2}^{-1} \Q_{\I_2,\I_2^c}  \boxed{{} \tsig^{(1,1)}_{\I_1}} \ \\
- \ \boxed{{}\tsig^{(1,1)}_{\I_1}} \ \Q_{\I_2^c,\I_2} \Q_{\I_2}^{-1} &  \boxed{{}\tsig^{(1,1)}_{\I_1}} 
\end{bmatrix}\\
&=\begin{bmatrix}
\tsig_{\I_2}^{(1,2)} & \tsig_{\I_2, \I_1}^{(1,2)} \\
\tsig_{\I_1,\I_2}^{(1,2)} & \tsig_{ \I_1}^{(1,1)} 
\end{bmatrix}. 
\end{aligned}
\vspace{-0.1cm}  
\end{equation}
This completes one full iteration, as both sets have been used to update the approximate inverse $\tsig$. 
%
This iterative process then continues by incrementing $r$ and iterating through the index sets $k = 1,2$ as described above. In each case, the matrix $\tsig_{\I^c}$ in \eqref{eq:IBMI} is obtained from the most recently computed approximation of $\tsig$. 
For example, at the next step after \eqref{eq:2block2}, with $r=2$ and $k=1$,
the principal sub-matrix $\tsig_{\I_2}^{(1,2)}$ would be retained when calculating $\tsig^{(2,1)}$, as we would set $\tsig_{\I_1^c}^{(2,1)} = \tsig_{\I_2}^{(1,2)}$.\\\\
%
In general, 
\begin{equation} 
\label{eq:2blockk1}
\begin{aligned}
    \Perm_1\tsig^{(r, 1)} _{\I_1}\Perm_1^{\top} &= \begin{bmatrix}\Q^{-1}_{\I_1} \! +\! \Q^{-1}_{\I_1}  \Q_{\I_1,\I_1^c}  \tsig^{(r-1,1)}_{\I_1^c} \  \Q_{\I_1^c,\I_1} \Q_{\I_1}^{-1} &
- \Q_{\I_1}^{-1} \Q_{\I_1,\I_1^c}   \tsig^{(r-1,1)}_{\I_1^c} \ \\
- \tsig^{(r-1,1)}_{\I_1^c} \ \Q_{\I_1^c,\I_1} \Q_{\I_1}^{-1} &  \tsig^{(r-1,1)}_{\I_1^c} 
\end{bmatrix}\\
&=\begin{bmatrix}
\tsig_{\I_1}^{(r,1)} & \tsig_{\I_1, \I_1^{c}}^{(r,1)} \\
\tsig_{\I_1^{c},\I_1}^{(r,1)} & \tsig_{ \I_1^{c}}^{(r-1,1)} 
\end{bmatrix}
\end{aligned}
\vspace{0.15cm} 
\end{equation}
and 
\begin{equation} 
\label{eq:2blockk2}
\begin{aligned}
\Perm_2\tsig^{(r,2)}_{\I_2}\Perm_2^\top &= \begin{bmatrix}\Q^{-1}_{\I_2} \! +\! \Q^{-1}_{\I_2}  \Q_{\I_2,\I_2^c}  {\tsig}^{(r,1)}_{\I_1} \  \Q_{\I_2^c,\I_2} \Q_{\I_2}^{-1} &
- \Q_{\I_2}^{-1} \Q_{\I_2,\I_2^c}  \tsig^{(r,1)}_{\I_1} \ \\
- \tsig^{(r,1)}_{\I_1} \ \Q_{\I_2^c,\I_2} \Q_{\I_2}^{-1} &  \tsig^{(r,1)}_{\I_1} 
\end{bmatrix}\\
&=\begin{bmatrix}
\tsig_{\I_2}^{(r,2)} & \tsig_{\I_2, \I_1}^{(r,2)} \\
\tsig_{\I_1,\I_2}^{(r,2)} & \tsig_{ \I_1}^{(r,1)} 
\end{bmatrix}. 
\end{aligned}
\vspace{-0.1cm}  
\end{equation}
Before presenting the full novel iterative block matrix inversion algorithm, we first generalise the two-block, non-overlapping case to the multi-block overlapping case. Introducing multiple blocks is essential when handling large matrices, while overlap significantly improves the convergence rate by facilitating faster transfer of information between the blocks. 

\subsection{ \two{The Multi-Block, Overlapping Case}}
For larger matrices, $\Q_{\I_1}$ and $\Q_{\I_2}$ are too large to efficiently invert in \eqref{eq:IBMI}. The two-block case can be generalised to multiple blocks by partitioning the diagonal using multiple sets $\I_k$ for $k=1,\ldots, K$. 
\begin{equation*}
 \Q= \begin{bmatrix}
        \Q_{\I_{1}} & \cdot  & \cdot& \cdot \\ 
         \cdot &\Q_{\I_2} & \cdot& \cdot \\ 
         \cdot & \cdot & \ddots& \vdots \\ 
         \cdot & \cdot & \hdots &\Q_{\I_{K}}
    \end{bmatrix}. 
\end{equation*}
When using multiple sets, the blocks $\Q_{\I_k}$ are smaller and can be inverted much faster. At every iteration we cycle through $k=1,\dotsc,K$. For each value of $k$ we set $\I = \I_k$, and $\I^c = \{1,\dotsc, p\}\setminus \I_k$ in the approximate block matrix inversion formula \cref{eq:IBMI}, always using the most recently computed approximation to define $\tsig_{\I^c}$.
For example, if $K=4$ non-overlapping sets are used, we partition  $\Q$ 
as in \cref{eq: Multi 4 sets}. When $k=1$, we let $\I = \I_1$ and $\I^c = \I_2 \ \cup \  \I_3 \  \cup  \ \I_4$. A visual representation of this partitioning into the $2 \times 2$ structure, which is needed in \eqref{eq:IBMI} for block matrix inversion, is given by the right matrix in \cref{eq: Multi 4 sets} below, with dots representing off-diagonal block matrices. 
\begin{align}
\Q= \ 
\begin{array}{|c|c|c|c|} \hline \label{eq: Multi 4 sets}
        \Q_{\I_{1}} & \cdot & \cdot & \cdot \\ \hline
\cdot & \Q_{\I_{2}} & \cdot & \cdot \\ \hline
\cdot & \cdot & \Q_{\I_{3}} & \cdot \\ \hline
\cdot & \cdot & \cdot & \Q_{\I_{4}} \\ \hline 
\end{array}\ ,  \ \
\Perm_1\Q_{\I_1}\Perm_1^{\top}\!=\! \ 
\begin{array}{|c|ccc|}\hline
 \Q_{\I_{1}}& \phantom{\sig_{\I^c}^{(N)}} &   \cdot             &   \phantom{\sig_{\I^c}^{(N)}} \\ \hline
 \phantom{\sig_{\I^c}^{(N)}} &  & &\phantom{\sig_{\I^c}^{(N)}}  \\
   \cdot  & \phantom{\sig_{\I^c}^{(N)}} & \Q_{\I^c_1} &  \\
 &  &  \phantom{\sig_{\I^c}^{(N)}} &  \\ \hline
\end{array} \ . 
\end{align}
Next, we set $k=2$ and let  $\I=\I_2$ and $\I^c = \I_{1}  \ \cup \ \I_3 \  \cup \ \I_4$. We continue in this manner until all $K=4$ sets are used for $\I$ in \eqref{eq:IBMI} to complete the first iteration. 
\\\\
Overlap between the blocks is also introduced to speed up the convergence of the IBMI algorithm. The four-block partitioning with overlap is shown in \cref{eq: overlap_figure}. \one{Here, $\I=\I_1$ and $\I^c = \{ 1 , \ldots, p \}  \backslash \I_1$. The set $\I^c$ captures the elements in $\I_2$ that are not included in $\I_1$, (i.e., the elements of $\I_2$ that are not in the overlap) as well all elements in the remaining sets, namely $\I_3$ and $\I_4$.}
As for the non-overlapping case, a visual representation of the resulting partitioning into the $2 \times 2$ structure for \eqref{eq:IBMI} is given in \cref{eq: overlap with BMI}. A similar process is then repeated for the other three sets, $\I_2$, $\I_3$ and $\I_4$, to complete one iteration. 

\begin{align} 
\Q&= \ 
\begin{array}{|ccc ccc ccc|} \hline \label{eq: overlap_figure}
    \cdot & \cdot & \multicolumn{1}{c|}{\cdot} & \cdot & \cdot & \cdot & \cdot &\cdot &\cdot   \\ 
\cdot & \! \Q_{\I_{1}}  & \multicolumn{1}{c|}{\cdot} & \cdot & \cdot & \cdot & \cdot &\cdot &\cdot  \\ 
\cline{3-5}
\cdot & \multicolumn{1}{c|}{\cdot} & \multicolumn{1}{c|}{\cdot} & \cdot  & \multicolumn{1}{c|}{\cdot} & \cdot & \cdot &\cdot &\cdot  \\ 
\cline{1-3}
\cdot & \multicolumn{1}{c|}{\cdot}&\cdot & \Q_{\I_{2}}  &\multicolumn{1}{c|}{\cdot} &   \cdot & \cdot & \cdot &\cdot   \\ 
\cline{5-7}
\cdot & \multicolumn{1}{c|}{\cdot} & \cdot & \multicolumn{1}{c|}{\cdot}  & \multicolumn{1}{c|}{\cdot}& \cdot& \multicolumn{1}{c|}{\cdot} &\cdot &\cdot  \\ 
\cline{3-5}
\cdot & \cdot & \cdot &  \multicolumn{1}{c|}{\cdot} & \cdot & \Q_{\I_3}& \multicolumn{1}{c|}{\cdot}  &\cdot &\cdot  \\ 
\cline{7-9}
\cdot & \cdot & \cdot &  \multicolumn{1}{c|}{\cdot} & \cdot & \multicolumn{1}{c|}{\cdot}& \multicolumn{1}{c|}{\cdot}  &\cdot &\cdot  \\ 
\cline{5-7}
\cdot & \cdot & \cdot &  \cdot & \cdot & \multicolumn{1}{c|}{\cdot} &\cdot  & \Q_{\I_4}  &\cdot  \\ 
\cdot & \cdot & \cdot &  \cdot & \cdot& \multicolumn{1}{c|}{\cdot}& \cdot &\cdot &\multicolumn{1}{c|}{\phantom{a}\cdot \phantom{a}} \\ 
\hline
\end{array}\ ,  \\
\! \! \! \! \!\Perm_1\Q_{\I_1}\Perm_1^{\top} &=
\begin{array}{|ccc ccc ccc|} \hline \label{eq: overlap with BMI}
    \cdot & \cdot & \multicolumn{1}{c|}{\cdot} & \cdot & \cdot & \cdot & \cdot &\cdot &\cdot   \\ 
\cdot & \Q_{\I_{1}} & \multicolumn{1}{c|}{\cdot} & \cdot & \cdot & \cdot & \cdot &\cdot &\cdot  \\ 
\cdot & \cdot & \multicolumn{1}{c|}{\cdot} & \cdot  & \cdot & \cdot & \cdot &\cdot &\cdot  \\ \hline
\cdot & \cdot & \multicolumn{1}{c|}{\cdot}  & \cdot  &\cdot &   \cdot & \cdot& \cdot &\cdot   \\ 
\cdot & \cdot & \multicolumn{1}{c|}{\cdot}  & {\cdot}  & \cdot& \cdot& \cdot &\cdot &\cdot  \\ 
\cdot & \cdot & \multicolumn{1}{c|}{\cdot}  &  \cdot & \cdot & \Q_{\I_1^c} & \cdot &\cdot &\cdot  \\ 
\cdot & \cdot & \multicolumn{1}{c|}{\cdot} &  {\cdot} & \cdot & \cdot& \cdot  &\cdot &\cdot  \\ 
\cdot & \cdot & \multicolumn{1}{c|}{\cdot} &  \cdot & \cdot & \cdot &\cdot  & \cdot  &\cdot  \\ 
\phantom{a}\cdot \phantom{a}& \cdot & \multicolumn{1}{c|}{\phantom{a}\cdot \phantom{a}} &  \phantom{a}\cdot \phantom{a} & \cdot& \cdot & \cdot &\cdot  &\cdot   \\  \hline
\end{array} \ . 
\end{align} \vspace{-0.5cm}
\\ \\ %
%
In this section, we presented the IBMI algorithm for the particular case of two non-overlapping blocks. We then described the generalisation to the case of multiple, overlapping blocks. 
In the next section, we give the full IBMI algorithm for this general case, and discuss the choice of initial guess.

\subsection{Iterative Block Matrix Inversion (IBMI) Algorithm} \label{sec:alg}
The full iterative block matrix inversion algorithm is given in \cref{alg:IBMI}, which can be applied for $\I_k$ sets for $k=1, \ldots, K$. The algorithm will produce a final matrix $\tsig_{\text{final}}$ from the matrix $\Q$ and will also return the number of iterations $r$ taken to reach the desired tolerance level set by the user. 
\two{Within} \cref{alg:IBMI}\two{, the termination criterion uses the error estimate  \vspace{-0.3cm}
\begin{equation} \label{eq:error}
    \texttt{Error} = \left\| \tsig_{\I}\Q_{\I,\I^c} + \tsig_{\I,\I^c}\Q_{\I^c}  \right\|_{2}, 
\end{equation} 
where $\|\cdot\|_2$} \two{is the usual matrix norm induced by the Euclidean  norm. This measures the size of the upper-right, i.e., $(1,2)$, block of $\tsig\Q$, which is zero when $\tsig = \sig$. We note that alternative stopping conditions could be implemented. A tolerance is set by the user and if this error estimate is lower than the tolerance then \cref{alg:IBMI} will return the full approximated matrix and number of iterations.}
\\ \\
\two{The motivation for \cref{eq:error} comes from considering 
\begin{align*}
    \tsig\Q =& 
     \begin{bmatrix}
\Q_{\I}^{-1}+\Q_{\I}^{-1}\Q_{\I,\I^c}\tsig_{\I} \Q_{\I^c,\I}\Q_{\I}^{-1} &  -\Q_{\I}^{-1} \  \Q_{\I,\I^c}\tsig_{\I^c}\\ 
         - \tsig_{\I^c}\Q_{\I^c,\I}\Q_{\I}^{-1} &  \tsig_{\I^c}
    \end{bmatrix}
    \begin{bmatrix}
        \Q_{\I} & \Q_{\I,\I^c} \\ 
        \Q_{\I^c,\I} & \Q_{\I^c} 
    \end{bmatrix} \\ 
    = &
    \begin{bmatrix}
\mathbf{I}& \Q_{\I}^{-1}\Q_{\I,\I^c}\left[ \mathbf{I} -\tsig_{\I^c} \sig_{\I^c}^{-1} \right]\\ 
\mathbf{0} & -\tsig_{\I^c} \sig_{\I^c}^{-1}
\end{bmatrix},  
\end{align*} }
\two{where the exact Schur complement is denoted by $\sig_{\I^c}^{-1} = \Q_{\I^c,\I^c}- \Q_{\I^c,\I}\Q_{\I,\I}^{-1}\Q_{\I,\I^c}$. 
It is clear that when the approximation of the Schur complement $\tsig_{\I^c}$ is exact i.e., when $\tsig_{\I^c} = \sig_{\I^c}$, then $\tilde{\Q}^{-1} \Q = \b{I}$, as expected. Recall that the error estimate \cref{eq:error} is the upper off-diagonal block matrix of $\tsig\Q$.} 
\\ \\ 
\two{The decision to use \cref{eq:error} was due to the computational cost and the measure of how well approximated the off-diagonals blocks were. Compared to other error estimates where matrix inverses are needed, the two matrix-matrix products and one matrix-matrix addition used in \cref{eq:error} are less computationally expensive, even for dense matrices.  Additionally, as mentioned at the start of \cref{sec:two-block}, the elements in the off-diagonal blocks may be poorly approximated. Therefore, if the error is small in the off-diagonal blocks—where the approximation can be worse—it suggests that the approximation is at least as good, if not better, in the principal sub-matrices.} \\ \\ %
\two{We note that the framework of \cref{alg:IBMI} is similar to multiplicative Schwarz methods due to needing the inverse of principal sub-matrices to approximate $\sig$, and the multiplicative nature of the updates (see \cite{multi_Schwarz}).}
\begin{algorithm}[b!]
\caption{Iterative Block Matrix Inversion (IBMI) Algorithm} \label{alg:IBMI}\begin{algorithmic}
\STATE{Inputs: $\Q$, \texttt{tol, $\I_{k}$ for $k=1, \ldots , K$}, initial approximation $\tsig_{\I_1^c}^{(0,1)}$ of $\sig_{\I_1^{c}}$ in \cref{eq:IBMI}.}
\STATE{ \two{$r=0$}}
\STATE{\textbf{While} \texttt{error } $>$ \texttt{tol}}
\STATE{\two{$r=r+1$}}
\FOR{$k=1:K$}
\STATE{Determine $\I_k^c$.}
\IF {$k=1$} 
  \STATE{Get $\tsig_{\I_k^{c}}^{(r,k)}$ from $\tsig^{(r-1,K)}$.}
\ELSE
  \STATE{Get $\tsig_{\I_k^{c}}^{(r,k)}$ from $\tsig^{(r,k-1)}$.}
\ENDIF
\STATE{Use $\tsig_{\I_k^c}^{(r,k)}$ and $\Q$ in the block matrix inversion equation \eqref{eq:IBMI}}.
\STATE{Obtain updated approximation $\displaystyle \tsig^{(r,k)} = \begin{bmatrix}
        \tsig_{\I_k}^{(r,k)} & \tsig_{\I_k, \I_k^c}^{(r,k)} \\ 
        \tsig_{\I_k^c, \I_k}^{(r,k)} & \tsig_{ \I_k^c}^{(r,k)} \end{bmatrix}$.}
\ENDFOR
\STATE{Compute error estimate.}
\STATE{\textbf{Return:} $\tsig_{\text{final}} = \tilde{\sig}^{(r,K)}$ and number of iterations $r$.}
\end{algorithmic}
\end{algorithm}
 \two{ However, multiplicative Schwarz methods rely on just using the principal sub-matrices whereas \cref{alg:IBMI} uses information from $\Q$ and off-diagonal sub-matrices to find an approximation for $\tsig$. } 
 \\ \\
We end this section by remarking on the choice of the initial guess for \cref{alg:IBMI}. In our experiments, we take $\tsig_{\I_1^c}^{(0,1)}$ to be the identity matrix of the appropriate dimension. This initial guess still produces an accurate approximation $\tsig$ of $\sig=\Q^{-1}$ and we find that \cref{alg:IBMI} converges within a small number of iterations for our test matrices (see \cref{sec:graphs}). However, any symmetric positive definite approximation of $\tsig_{\I_1^c}$ can be used as an initial guess, \two{which is discussed further in \cref{sec:initial_guess}}. 

\section{Convergence of the IBMI algorithm}
\label{sec:conv}
In this section, the convergence of \cref{alg:IBMI} will be examined for the particular case of two non-overlapping blocks (cf.\ \cref{sec:two-block}). In this case, the diagonal blocks of the symmetric positive definite matrix $\Q$ are defined by the non-intersecting sets $\I_1$ and $\I_2$. 
Recall that in this case $\I_1^{c} = \I_2$ and $\I_2^{c} = \I_1$. 
The first step will be to show that the error at the $r$th iteration is related to the error in the initial guess.
\begin{lemma} \label{equ:lemma}
Let $\Q \in \mathbb{R}^{p \times p } $ be a symmetric positive definite matrix with inverse $\sig$, and let $\I_1$ and $\I_2$ be index sets such that $\I_1 \cup \I_2 = \{ 1,2, \ldots, p\}$, $\I_1 \cap \I_2 = \emptyset$. Let $\sig_{\I_2}$ be the sub-matrix formed from the rows and columns of $\sig$ indexed by $\I_2$, and let $\tsig_{\I_2}^{(r,2)}$ be the approximation of this matrix after $r$ complete iterations of \cref{alg:IBMI}. Then the error $\tsig^{\left(r,2\right)}_{\I_{2}} - {\sig}_{\I_{2}}$ at iteration $r$ satisfies, 
   \begin{equation}
       \label{lemma3.3}
   \tsig^{\left(r,2\right)}_{\I_{2}} - {\sig}_{\I_{2}}  \!=\left( \Q_{\I_2}^{-1} \Q_{\I_2,\I^c_2} \Q_{\I_1}^{-1} \Q_{\I_1,\I_2}\right)^{(r)} \left[\tsig_{\I_{2}}^{(0,2)} -  \sig_{\I_2}\right] \left( \Q_{\I_2,\I_1} \Q_{\I_1}^{-1} \Q_{\I_1,\I_2} \Q_{\I_2}^{-1} \right)^{(r)}.
   \end{equation}
\end{lemma}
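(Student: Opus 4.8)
The plan is to isolate how the error in the $\I_2$ block evolves over one complete iteration, and then unroll the resulting recurrence. I would introduce the shorthand $E_r := \tsig^{(r,2)}_{\I_2} - \sig_{\I_2}$ for the error in the $\I_2$ block after $r$ complete iterations, together with $F_r := \tsig^{(r,1)}_{\I_1} - \sig_{\I_1}$ for the error produced by the first ($k=1$) half-step of iteration $r$. The two newly-computed diagonal blocks are read off from \cref{eq:2blockk1} and \cref{eq:2blockk2}: using $\I_1^c = \I_2$ and $\I_2^c = \I_1$, the $k=1$ step sets $\tsig^{(r,1)}_{\I_1} = \Q_{\I_1}^{-1} + \Q_{\I_1}^{-1}\Q_{\I_1,\I_2}\,\tsig^{(r-1,2)}_{\I_2}\,\Q_{\I_2,\I_1}\Q_{\I_1}^{-1}$ (reading the most recently updated $\I_2$ block from the previous iteration), and the $k=2$ step sets $\tsig^{(r,2)}_{\I_2} = \Q_{\I_2}^{-1} + \Q_{\I_2}^{-1}\Q_{\I_2,\I_1}\,\tsig^{(r,1)}_{\I_1}\,\Q_{\I_1,\I_2}\Q_{\I_2}^{-1}$ (reading the $\I_1$ block just computed in the current iteration).

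The key observation is that the exact blocks $\sig_{\I_1}$ and $\sig_{\I_2}$ are \emph{simultaneous} fixed points of these two maps. Applying the exact block inversion formula \cref{BMIQ} with $\I=\I_1$ gives $\sig_{\I_1} = \Q_{\I_1}^{-1} + \Q_{\I_1}^{-1}\Q_{\I_1,\I_2}\,\sig_{\I_2}\,\Q_{\I_2,\I_1}\Q_{\I_1}^{-1}$, while applying the same formula with the roles of the two index sets interchanged (permuting $\I_2$ to the front) gives $\sig_{\I_2} = \Q_{\I_2}^{-1} + \Q_{\I_2}^{-1}\Q_{\I_2,\I_1}\,\sig_{\I_1}\,\Q_{\I_1,\I_2}\Q_{\I_2}^{-1}$. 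Both identities hold at once, because each merely expresses the corresponding diagonal block of the true inverse $\sig$. Subtracting them from the two update formulas makes the constant terms $\Q_{\I_1}^{-1}$ and $\Q_{\I_2}^{-1}$ cancel, leaving the coupled error recurrences
\[
F_r = \Q_{\I_1}^{-1}\Q_{\I_1,\I_2}\,E_{r-1}\,\Q_{\I_2,\I_1}\Q_{\I_1}^{-1},
\qquad
E_r = \Q_{\I_2}^{-1}\Q_{\I_2,\I_1}\,F_r\,\Q_{\I_1,\I_2}\Q_{\I_2}^{-1}.
\]

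Eliminating $F_r$ by substitution then collapses the iteration to the single multiplicative recurrence
\[
E_r = M\,E_{r-1}\,M^{\top},
\qquad
M := \Q_{\I_2}^{-1}\Q_{\I_2,\I_1}\Q_{\I_1}^{-1}\Q_{\I_1,\I_2},
\]
where the trailing factor is recognised as $M^{\top}$: since $\Q$ is symmetric positive definite, its principal blocks $\Q_{\I_1},\Q_{\I_2}$ are symmetric and $\Q_{\I_2,\I_1}=\Q_{\I_1,\I_2}^{\top}$, so $M^{\top}=\Q_{\I_2,\I_1}\Q_{\I_1}^{-1}\Q_{\I_1,\I_2}\Q_{\I_2}^{-1}$ matches exactly. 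A straightforward induction on $r$, with base case $E_0 = \tsig^{(0,2)}_{\I_2}-\sig_{\I_2}$, yields $E_r = M^{r}\,E_0\,(M^{\top})^{r}$; rewriting $\Q_{\I_2,\I_1}=\Q_{\I_2,\I_2^c}$ in the leading factor reproduces \cref{lemma3.3} verbatim.

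I expect the main obstacle to be careful bookkeeping rather than anything conceptual: one must correctly pair each half-step with the complementary block it reads from, and — the substantive point — justify that the two exact fixed-point identities for $\sig_{\I_1}$ and $\sig_{\I_2}$ hold \emph{at the same time}. It is precisely this dual fixed-point property that forces the constant $\Q_{\I_k}^{-1}$ terms to cancel and reduces the whole analysis to the clean form $E_r = M E_{r-1} M^{\top}$.
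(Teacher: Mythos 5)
Your proposal is correct and follows essentially the same route as the paper: both derive the half-step updates from \cref{eq:2blockk1}--\cref{eq:2blockk2}, use the fact that the exact blocks $\sig_{\I_1}$ and $\sig_{\I_2}$ satisfy the same (exact) block-inversion identities so the constant $\Q_{\I_k}^{-1}$ terms cancel, and then unroll the resulting recurrence $E_r = M E_{r-1} M^{\top}$ by induction. The only cosmetic difference is that you subtract at each half-step and then eliminate $F_r$, whereas the paper first composes the two half-steps into a single recurrence for $\tsig^{(r,2)}_{\I_2}$ and subtracts once.
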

\begin{proof}
To begin, we see from \cref{eq:2blockk1} that the upper diagonal block of the approximation,  $\tsig_{\I_1}^{(r,1)}$, at iteration $r$ is: 
\begin{align*} 
     \tsig_{\I_{1}}^{(r,1)} = \Q_{\I_1}^{-1} + \Q_{\I_1}^{-1} \Q_{\I_1,\I_2} \tsig_{\I_2}^{(r-1,2)} \Q_{\I_2,\I_1}\Q_{\I_1}^{-1}.
\end{align*}
This approximation is then used to update $\tsig_{\I_2} ^{(r,2)}$ using \cref{eq:2blockk2} to give 
\begin{equation}
\label{eq:approx_schur_recurrence}
\begin{aligned}
    \tsig_{\I_{2}}^{(r,2)} &= \Q_{\I_2}^{-1} + \Q_{\I_2}^{-1} \Q_{\I_2,\I_1} \tsig_{\I_1}^{(r,1)} \Q_{\I_1,\I_2} \Q_{\I_2}^{-1} \\ 
      &= \Q_{\I_2}^{-1} + \Q_{\I_2}^{-1} \Q_{\I_2,\I_1} \left[ \Q_{\I_1}^{-1} + \Q_{\I_1}^{-1} \Q_{\I_1,\I_2} \tsig_{\I_{2}}^{(r-1,2)}  \Q_{\I_2,\I_1}\Q_{\I_1}^{-1}   \right] \Q_{\I_1,\I_2} \Q_{\I_2}^{-1}. \\ 
\end{aligned}
\end{equation}
The exact Schur complement satisfies the same recurrence, since in this case \eqref{eq:IBMI} reduces to \eqref{BMIQ}. Hence, 
\begin{equation}
\label{eq:exact_schur_recurrence}
\begin{aligned}
    \sig_{\I_{2}}  &= \Q_{\I_2}^{-1} + \Q_{\I_2}^{-1} \Q_{\I_2,\I_1} \sig_{\I_1} \Q_{\I_1,\I_2} \Q_{\I_2}^{-1} \\     
    &= \Q_{\I_2}^{-1} + \Q_{\I_2}^{-1} \Q_{\I_2,\I_1} \left[ \Q_{\I_1}^{-1} + \Q_{\I_1}^{-1} \Q_{\I_1,\I_2} \sig_{\I_{2}}  \Q_{\I_2,\I_1}\Q_{\I_1}^{-1}   \right] \Q_{\I_1,\I_2} \Q_{\I_2}^{-1}, \\ 
\end{aligned}
\end{equation}
 where $ \sig_{\I_2} = \left( \Q_{\I^c} - \Q_{\I^c,\I} \Q_{\I}^{-1} \Q_{\I,\I^c}\right)^{-1}$.
It then follows from \cref{eq:approx_schur_recurrence} and \cref{eq:exact_schur_recurrence} that  
\begin{align*}
    \tsig^{\left(r,2\right)}_{\I_{2}} - {\sig}_{\I_{2}}  
    & = \Q_{\I_2}^{-1} \Q_{\I_2,\I_1} \Q_{\I_1}^{-1} \Q_{\I_1,\I_2} \left[\tsig_{\I_{2}}^{(r-1)} - \sig_{\I_2}\right]  \Q_{\I_2,\I_1}\Q_{\I_1}^{-1}  \Q_{\I_1,\I_2} \Q_{\I_2}^{-1}.
\end{align*}
By induction, on the iteration $r$, we
obtain the result.
\end{proof}
\cref{equ:lemma} can now be used to bound the error in the iterative block matrix inversion algorithm (\cref{alg:IBMI}), as we now show. \\ %
\begin{theorem}\label{lemma}
Let $\Q \in \mathbb{R}^{p\times p}$ be a symmetric positive definite matrix with inverse $\sig$, and let $\I_1$ and $\I_2$ be index sets such that $\I_1 \cup \I_2 = \{1,2,\dotsc,p\}, \ \I_1 \cap \I_2 = \emptyset$. Let $\sig_{\I_2}$ be the sub-matrix formed from the rows and columns of $\sig$ indexed by $\I_2$, and let $\tsig_{\I_2}^{(r,2)}$ be the approximation of this matrix after $r$ complete iterations of \cref{alg:IBMI} with sets $\I_1$ and $\I_2$.  
Then, the error in $\tsig_{\I_2}^{(r,2)}$ can be bounded by, 
\begin{equation}
\label{eq:bound}
\left\|{\tsig}_{\I_{2}}^{(r,2)} - {\sig}_{\I_{2}} \right\|_2 \leq \left\| \left( \Q_{\I_2}^{-1} \Q_{\I_2,\I_1} \Q_{\I_1}^{-1} \Q_{\I_1,\I_2}\right)\right\|_{2}^{2r} \left\| \tsig^{(0,2)}_{\I_2} -  \sig_{\I_2} \right\|_{2}. 
\end{equation}
Moreover, the iterative method will converge for any symmetric positive definite initial guess $\tsig^{(0,2)}_{\I_2}$. 
\end{theorem}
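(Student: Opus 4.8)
The plan is to feed the closed-form error recurrence of \cref{equ:lemma} into a spectral analysis of the iteration operator. First I would introduce the shorthand $M := \Q_{\I_2}^{-1}\Q_{\I_2,\I_1}\Q_{\I_1}^{-1}\Q_{\I_1,\I_2}$ and observe that, because $\Q$ is symmetric, $\Q_{\I_1,\I_2}^\top = \Q_{\I_2,\I_1}$ while $\Q_{\I_1}$ and $\Q_{\I_2}$ are symmetric, so the right-hand factor in \cref{lemma3.3} is exactly $M^\top$. Thus \cref{equ:lemma} reads $\tsig_{\I_2}^{(r,2)} - \sig_{\I_2} = M^{r}\,[\tsig_{\I_2}^{(0,2)} - \sig_{\I_2}]\,(M^{r})^\top$. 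The error bound \cref{eq:bound} then drops out of submultiplicativity and transpose-invariance of the induced $2$-norm together with $\|M^r\|_2\le\|M\|_2^r$, giving $\|\tsig_{\I_2}^{(r,2)} - \sig_{\I_2}\|_2 \le \|M^r\|_2^2\,\|\tsig_{\I_2}^{(0,2)} - \sig_{\I_2}\|_2 \le \|M\|_2^{2r}\,\|\tsig_{\I_2}^{(0,2)} - \sig_{\I_2}\|_2$.

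For the convergence claim I would \emph{not} attempt to argue $\|M\|_2<1$: for a strongly coupled, ill-conditioned $\Q$ the induced $2$-norm $\|M\|_2$ can exceed one even though the iteration still converges, so the displayed bound is a valid but possibly pessimistic envelope rather than a contraction estimate. What genuinely governs convergence is $\rho(M)$. Hence the decisive step is to prove $\rho(M)<1$ and then invoke $\|M^r\|_2\to 0$, which holds for every matrix whose spectral radius is below one.

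To get $\rho(M)<1$ I would symmetrize. The operator $M$ is similar to the symmetric positive semidefinite matrix $\widetilde M := \Q_{\I_2}^{-1/2}\Q_{\I_2,\I_1}\Q_{\I_1}^{-1}\Q_{\I_1,\I_2}\Q_{\I_2}^{-1/2}$ through $M = \Q_{\I_2}^{-1/2}\widetilde M\,\Q_{\I_2}^{1/2}$, so the eigenvalues of $M$ are real and non-negative and coincide with the generalized eigenvalues of the pencil $(\Q_{\I_2,\I_1}\Q_{\I_1}^{-1}\Q_{\I_1,\I_2},\,\Q_{\I_2})$. These all lie strictly below one precisely when $\Q_{\I_2,\I_1}\Q_{\I_1}^{-1}\Q_{\I_1,\I_2}\prec\Q_{\I_2}$, i.e. when the Schur complement $\Q_{\I_2}-\Q_{\I_2,\I_1}\Q_{\I_1}^{-1}\Q_{\I_1,\I_2}$ is positive definite; this is exactly the inverse of the $\sig_{\I_2}$ appearing in \cref{eq:exact_schur_recurrence}. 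Since $\Q$ is symmetric positive definite its Schur complement with respect to $\Q_{\I_1}$ is positive definite, and therefore $\rho(M)<1$.

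Finally $\rho(M)<1$ yields $\|M^r\|_2\to 0$: using $\|\widetilde M^r\|_2=\rho(M)^r$ and the similarity gives $\|M^r\|_2\le\|\Q_{\I_2}^{-1/2}\|_2\,\|\Q_{\I_2}^{1/2}\|_2\,\rho(M)^r$, so $\|\tsig_{\I_2}^{(r,2)} - \sig_{\I_2}\|_2\le\|M^r\|_2^2\,\|\tsig_{\I_2}^{(0,2)} - \sig_{\I_2}\|_2\to 0$ for any starting matrix; positive-definiteness of $\tsig_{\I_2}^{(0,2)}$ is not required for the limit itself, only to make the guess admissible in \cref{alg:IBMI}. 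The hard part is the third paragraph—pinning down $\rho(M)<1$—because $M$ is non-symmetric, forcing one to pass through the symmetrizing similarity and convert the Schur-complement positivity of $\Q$ into a strict bound on the (generalized) eigenvalues; the norm manipulations on either side of it are routine.
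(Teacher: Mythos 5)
Your proposal is correct and follows essentially the same route as the paper: take norms of the error recurrence from \cref{equ:lemma}, use submultiplicativity to obtain \cref{eq:bound}, and then establish convergence via $\rho(\Q_{\I_2}^{-1}\Q_{\I_2,\I_1}\Q_{\I_1}^{-1}\Q_{\I_1,\I_2})<1$ together with $\|M^r\|_2\to 0$. The only difference is that you prove the spectral radius fact from scratch via the symmetrizing similarity and positive definiteness of the Schur complement, where the paper simply cites Theorem 7.7.7 of Horn and Johnson; your observation that convergence rests on $\rho(M)<1$ rather than $\|M\|_2<1$ is also a welcome clarification of the paper's final line.
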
 
\begin{proof}
Our goal will be to bound the norm of  $\sig_{\I_2}-\tsig_{\I_2}^{(r,2)}$, i.e., the error, after $r$ iterations, of the approximation to $\sig_{\I_2}$. 
Taking 2-norms of \eqref{lemma3.3} shows that 
\begin{align*}
    & \! \! \! \left\|{\tsig}^{(r,2)}_{\I_{2}} \!\! \! - {\sig}_{\I_{2}} \right\|_2 \! \! = \!  \left\| \left( \Q_{\I_2}^{-1}\! \Q_{\I_2,\I_1} \Q_{\I_1}^{-1} \Q_{\I_1,\I_2} \right)^{\!(r)} \! \! \left[ \tsig_{\I_2}^{(0,2)}\! \!\! \!-\! \sig_{\I_2} \! \right] \! \! \left( \Q_{\I_2,\I_1} \Q_{\I_1}^{-1} \Q_{\I_1,\I_2} \Q_{\I_2}^{-1} \! \right)^{\!(r)}\right\|_2 \\
&  \qquad =  \left\|\left( \Q_{\I_2}^{-1} \Q_{\I_2,\I_1} \Q_{\I_1}^{-1} \Q_{\I_1,\I_2} \right)^{r} \right\|_{2}^{2}\left\| \tsig_{\I_2}^{(0,2)}- \sig_{\I_2}\right\|_{2} \\ 
& \qquad \leq  \left\|\left( \Q_{\I_2}^{-1} \Q_{\I_2,\I_1} \Q_{\I_1}^{-1} \Q_{\I_1,\I_2} \right) \right\|_{2}^{2r} \left\|\tsig_{\I_2}^{(0,2)}- \sig_{\I_2}\right\|_{2}. 
\end{align*}
This proves the first part. \\ \\
The second part  follows from Theorem 7.7.7 in \cite[pg.497]{Horn_Johnson_2012} 
which shows that, whenever $\Q$ is symmetric positive definite, $\rho( \Q_{\I_1,\I_2}\Q_{\I_2}^{-1} \Q_{\I_2,\I_1} \Q_{\I_1}^{-1}) < 1$ where $\rho(\cdot)$ is the spectral radius. It then follows, by similarity, that 
that $\rho( \Q_{\I_2}^{-1} \Q_{\I_2,\I_1} \Q_{\I_1}^{-1}\Q_{\I_1,\I_2})<1$.
Finally, since $A^k \to 0$ as $k \to \infty$ if and only if $\rho(A)< 1$ for any square matrix $A$, we see that 
$\left\|\left( \Q_{\I_2}^{-1} \Q_{\I_2,\I_1} \Q_{\I_1}^{-1} \Q_{\I_1,\I_2} \right) \right\|_{2}^{2r} \rightarrow 0$ as $r \to \infty$.  
\end{proof}%

\begin{remark}
Although the current convergence analysis is limited to the two-block non-overlapping case, numerical experiments have suggested that the algorithm converges for any symmetric positive definite matrix when $K>2$ overlapping blocks are used. More evidence of this is detailed in \cref{sec:graphs}. 
\end{remark}%
\begin{corollary}
\one{Let $\Q$, $\I_1$ and $\I_2$ be as in \cref{lemma}. Let $\sig = \Q^{-1}$ and let $\tsig^{(r,2)}$ be the approximation of this matrix after $r$ complete iterations of \cref{alg:IBMI} with sets $\I_1$ and $\I_2$.  Then $\tsig^{(r,2)}$ converges to $\sig$ for any symmetric positive definite initial guess $\tsig_{\I_2}^{(0,2)}$. }
\end{corollary}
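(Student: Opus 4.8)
The plan is to deduce convergence of the \emph{full} approximation from the convergence of its blocks, exploiting that every block of $\Perm_2\tsig^{(r,2)}\Perm_2^\top$ is an explicit expression in the two Schur-complement approximations $\tsig_{\I_2}^{(r,2)}$ and $\tsig_{\I_1}^{(r,1)}$ together with fixed blocks of $\Q$. First I would record the two block forms side by side. From \cref{eq:2blockk2} (using $\I_2^c=\I_1$),
\begin{equation*}
\Perm_2\tsig^{(r,2)}\Perm_2^\top = \begin{bmatrix} \tsig_{\I_2}^{(r,2)} & -\Q_{\I_2}^{-1}\Q_{\I_2,\I_1}\tsig_{\I_1}^{(r,1)} \\ -\tsig_{\I_1}^{(r,1)}\Q_{\I_1,\I_2}\Q_{\I_2}^{-1} & \tsig_{\I_1}^{(r,1)} \end{bmatrix},
\end{equation*}
while the exact formula \cref{BMIQ} with $\I=\I_2$ gives
\begin{equation*}
\Perm_2\sig\Perm_2^\top = \begin{bmatrix} \sig_{\I_2} & -\Q_{\I_2}^{-1}\Q_{\I_2,\I_1}\sig_{\I_1} \\ -\sig_{\I_1}\Q_{\I_1,\I_2}\Q_{\I_2}^{-1} & \sig_{\I_1} \end{bmatrix}.
\end{equation*}
Subtracting, every block of the difference is a fixed matrix multiplying one of the two errors $\tsig_{\I_2}^{(r,2)}-\sig_{\I_2}$ or $\tsig_{\I_1}^{(r,1)}-\sig_{\I_1}$, so it suffices to show both of these tend to zero in the $2$-norm.

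The top-left block is controlled immediately, since \cref{lemma} already yields $\|\tsig_{\I_2}^{(r,2)}-\sig_{\I_2}\|_2\to 0$. The step I expect to need the most care is the convergence of the \emph{retained} block $\tsig_{\I_1}^{(r,1)}$, because \cref{lemma} is phrased for the $(r,2)$ iterate rather than the $(r,1)$ one. Here I would reuse the one-step relation already established inside the proof of \cref{equ:lemma}: combining \cref{eq:2blockk1} with \cref{eq:exact_schur_recurrence} gives
\begin{equation*}
\tsig_{\I_1}^{(r,1)} - \sig_{\I_1} = \Q_{\I_1}^{-1}\Q_{\I_1,\I_2}\left[\tsig_{\I_2}^{(r-1,2)}-\sig_{\I_2}\right]\Q_{\I_2,\I_1}\Q_{\I_1}^{-1},
\end{equation*}
whence $\|\tsig_{\I_1}^{(r,1)}-\sig_{\I_1}\|_2 \le \|\Q_{\I_1}^{-1}\Q_{\I_1,\I_2}\|_2^2\,\|\tsig_{\I_2}^{(r-1,2)}-\sig_{\I_2}\|_2$, which tends to zero by \cref{lemma}. (The squared factor uses $\Q_{\I_2,\I_1}\Q_{\I_1}^{-1}=(\Q_{\I_1}^{-1}\Q_{\I_1,\I_2})^\top$, so the two spectral-norm factors coincide.)

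Finally I would assemble the pieces. Each diagonal block of $\Perm_2(\tsig^{(r,2)}-\sig)\Perm_2^\top$ converges to zero by the two estimates above, and each off-diagonal block converges because it is a \emph{fixed} matrix times the error $\tsig_{\I_1}^{(r,1)}-\sig_{\I_1}$, e.g. $\|\Q_{\I_2}^{-1}\Q_{\I_2,\I_1}(\tsig_{\I_1}^{(r,1)}-\sig_{\I_1})\|_2 \le \|\Q_{\I_2}^{-1}\Q_{\I_2,\I_1}\|_2\,\|\tsig_{\I_1}^{(r,1)}-\sig_{\I_1}\|_2 \to 0$. Hence the whole block matrix converges, i.e. $\|\Perm_2(\tsig^{(r,2)}-\sig)\Perm_2^\top\|_2\to 0$. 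Since $\Perm_2$ is a fixed orthogonal (permutation) matrix, conjugation by it preserves the $2$-norm, so $\|\tsig^{(r,2)}-\sig\|_2\to 0$, i.e. $\tsig^{(r,2)}\to\sig$ for any symmetric positive definite initial guess $\tsig_{\I_2}^{(0,2)}$, as claimed.
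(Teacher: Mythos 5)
Your proof is correct and follows essentially the same route as the paper: the paper's own argument simply observes that once $\tsig_{\I_2}^{(r,2)}\to\sig_{\I_2}$ (by \cref{lemma}), feeding the exact Schur complement into \cref{eq:IBMI} returns the exact inverse $\sig$. Your version makes this precise by expressing every block of $\Perm_2\bigl(\tsig^{(r,2)}-\sig\bigr)\Perm_2^\top$ as a fixed matrix times one of the two Schur-complement errors (including the one-step relation controlling the retained block $\tsig_{\I_1}^{(r,1)}$), which is a welcome tightening of the paper's informal limit-passing step.
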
%
\begin{proof}
\one{Let $\sig_{\I_2}$ and $\tsig_{\I_2}^{(r,2)}$ be as in \cref{lemma}. 
Then, we know from  \cref{lemma} that 
$\tsig_{\I_2}^{(r,2)}$ converges to $\sig_{\I_2}$. 
The next step of \cref{alg:IBMI} will, therefore, use the exact Schur complement in \cref{eq:IBMI}, which will then return the exact inverse $\sig$. Therefore, the two block non-overlapping case in \cref{alg:IBMI} will converge to the exact inverse of the whole matrix $\Q$.}
\end{proof}

\section{\one{Computational Cost of the IBMI Algorithm}} \label{sec:cost}
\one{The computational cost of one iteration of \cref{alg:IBMI} will now be discussed for the multi-block partitioning with overlapping blocks. (The non-overlapping case is recovered by setting the overlap to 0.) 
The most expensive operations of the algorithm are inverting the principal sub-matrices $\Q_{\I_k}$, $k = 1,\dotsc, K$, and performing matrix-matrix multiplications. Although the exact cost of these operations will depend on the properties of $\Q$, and the sets $\I_k$, the following analysis provides a sense of the cost per iteration. The flop\footnote{{Here, flop stands for floating point operations.}} counts for matrix-matrix and matrix-vector products are calculated according to \cite[pg.18]{geneMatrixComps}. For matrices $A \! \in \mathbb{R}^{q\times s}$, $B\in \mathbb{R}^{s\times t}$ and $C\in\mathbb{R}^{q\times t}$, and a vector $\b{v} \in \mathbb{R}^s$, the cost of computing $AB+C$ is $\mathcal{O}(2qst)$ flops, and the cost of computing $A\b{v}$ is $\mathcal{O}(2qs)$ flops. }
\\ \\ 
\one{Assume for simplicity that $\Q\in\mathbb{R}^{Km\times Km}$ is a dense, symmetric positive definite matrix, which is partitioned in $K$ overlapping sets $\I_k$ for $k=1,\ldots, K$. 
Care is needed when calculating the cost for the multi-block partitioning, as not every set $\I_k$ has the same amount of overlap; see \cref{eq: overlap with BMI}. The sub-matrices $\Q_{\I_1}$ and $\Q_{\I_K}$ will have half the number of overlapping elements compared to the other sub-matrices $\Q_{\I_k}$ for $k=2,\ldots, K-1$. Therefore, we first calculate the cost for $k=1$ and $k=K$, then consider $k = 2,\dotsc, K-1$ to  find an overall cost for one iteration of \cref{alg:IBMI}. We stress that for each set $\I_k$, we must calculate the cost of each sub-matrix of \cref{eq:IBMI},  
noting that the matrix-matrix product $\Q_{\I}^{-1}\Q_{\I,\I^c}$, or its transpose, appears four times. }
\subsection{\one{Cost of IBMI Algorithm for $k=1$ or $k=K$}}
\one{When $k = 1$ or $k = K$, the index set 
$\I_k$ is chosen such that $|\I_k| = m+h$, where $h$ is a fixed number of elements in the overlap. Additionally $| \I^c| = (K-1)m -h$. We break down the cost of calculating $\tsig_{\I}$ in \cref{eq:IBMI}, which will include calculating the cost of the off diagonal block $\tsig_{\I_k,\I_k^c}$ and the principal sub-matrix $\tsig_{\I_k}$. The most costly operation is computing $\Q_{\I}^{-1}$, which involves $\mathcal{O}((m+h)^3/3)$ flops to obtain a Cholesky factorisation and $\mathcal{O}(2(m+h)^3)$ flops to solve the linear systems required to find the inverse. The remaining operations to compute \cref{eq:IBMI} are matrix-matrix products and sums. Therefore, 
\begin{equation*}.
\begin{aligned} 
\text{Cost}\left(\Q^{-1}_{\I}\right) &=   \mathcal{O}\left(\frac{1}{3}(m+h)^3 + 2\left(m+h\right)^3 \right);  \\ 
\text{Cost}\left(\Q^{-1}_{\I}\Q_{\I,\I^c} \right)&=  \mathcal{O}\left(2(m+h)^2\left(\left(K-1\right)m-h\right) \right); \\ 
\text{Cost}\left(\Q_{\I}^{-1} \Q_{\I,\I^c} \tsig_{\I^c}  \right)&= \mathcal{O}\left(2(m+h)\left(\left(K-1\right)m-h\right)^2 \right); \\ 
\text{Cost}\left( \Q_{\I}^{-1} + \Q_{\I}^{-1} \Q_{\I,\I^c} \tsig_{\I^c} \Q_{\I^c,\I} \Q_{\I}^{-1} \right)&=   \mathcal{O}\left(2(m+h)^2\left(\left(K-1\right)m-h\right) \right). 
\end{aligned}%
\end{equation*} }
\one{Therefore, the cost of one application of \cref{eq:IBMI} for the set $\I_k$ where $k=1 \ \text{or} \ K$ is: 
\begin{equation*}
\begin{aligned}
    \text{Cost}\left(\tsig^{(r,k)}\right) &= \mathcal{O}\left(\frac{7}{3}(m+h)^3  \right) + \mathcal{O}\left(4(m+h)^2\left(\left(K-1\right)m-h\right) \right) + \\ 
    &  
    \qquad \mathcal{O}\left(2(m+h)\left(\left(K-1\right)m-h\right)^2 \right) \\  
     & = \mathcal{O}\left(\frac{1}{3}(m+h)^3 + 2K^2m^2\left(m+h\right)   \right).
\end{aligned}%
\end{equation*}} 
\vspace{-0.75cm}
\subsection{\one{Cost of IBMI Algorithm for $k= 2, \ldots, K-1$}}
\one{A similar process can be used to find the cost for the remaining sub-matrices. The sets $\I_k$ for $k=2,\ldots,K-1$ are chosen such that $|\I_k| = m+2h$ and $|\I^c | = (K-1)m-2h$. Therefore, the cost of one application of \cref{eq:IBMI} can be broken down as follows:
\begin{equation*}
\begin{aligned}
\text{Cost}\left(\Q^{-1}_{\I}\right) &=  \mathcal{O}\left(\frac{1}{3}(m+2h)^3 + 2\left(m+2h\right)^3 \right).  \\ 
\text{Cost}\left(\Q^{-1}_{\I}\Q_{\I,\I^c} \right)&=  \mathcal{O}\left(2(m+2h)^2\left(\left(K-1\right)m-2h\right) \right). \\ 
\text{Cost}\left(\Q_{\I}^{-1} \Q_{\I,\I^c} \tsig_{\I^c}  \right)&= \mathcal{O}\left(2(m+2h)\left(\left(K-1\right)m-h\right)^2 \right). \\ 
\text{Cost}\left( \Q_{\I}^{-1} + \Q_{\I}^{-1} \Q_{\I,\I^c} \tsig_{\I^c} \Q_{\I^c,\I} \Q_{\I}^{-1} \right)&=   \mathcal{O}\left(2(m+2h)^2\left(\left(K-1\right)m-2h\right) \right). 
\end{aligned}%
\end{equation*}%
The final cost for one application of \cref{eq:IBMI} in this case is: 
 \begin{align*}
    \text{Cost}\left(\tsig^{(r,k)}\right) &=  \mathcal{O}\left(\frac{7}{3}(m+2h)^3  \right) + \mathcal{O}\left(4(m+2h)^2\left(\left(K-1\right)m-2h\right) \right) + \\ 
    & \qquad \mathcal{O}\left(2(m+2h)\left(\left(K-1\right)m-2h\right) \right)^2 \\ 
   & = \mathcal{O}\left(\frac{1}{3}(m+2h)^3 + 2K^2m^2\left(m+2h\right)   \right).
\end{align*}}
\one{The total cost for one iteration of \cref{alg:IBMI} is therefore:
\vspace{-0.2cm}
\begin{align*}%
    \text{Cost} \left( \tsig^{(r,k)}\right) 
    &= \mathcal{O}\left(\frac{2}{3}(m+h)^3 + 4K^2m^2 \left(m+h\right)   \right)+ \\ 
    &
    \qquad \mathcal{O}\left((K-2)\left(\frac{1}{3}(m+2h)^3 + 2K^2m^2\left(m+2h\right) \right)   \right) \\ 
    &=\mathcal{O} \bigg( \frac{2}{3}(m+h)^3 + \frac{(K-2)}{3}(m+2h)^3  + 4K^2\left(K-1\right)m^2h  +2K^3m^3\bigg).
\end{align*} }
\one{When $\Q$ is partitioned into multiple sub-matrices with \textbf{no overlap}, the above equation reduces to: 
\begin{align*}
   \text{Cost} \left( \tsig^{(r,k)}\right)  
        &=  \mathcal{O}\left( \left( \frac{1}{3}+2K^2  \right)Km^3 \right). 
\end{align*}}
The initial guess of the inverse of the Schur complement can also be considered here, but since we use the identity matrix there is no additional cost. 
\\ \\ 
When $\Q$ is partitioned according to the multi-block \textbf{non-overlapping} case, \cref{alg:IBMI} can take many iterations to converge (see \cref{sec:graphs}). However, as we will see in \cref{sec:overlap} when a small amount of overlap is added between the diagonal blocks, \cref{alg:IBMI} can take just one iteration to converge. For these cases, the cost of \cref{alg:IBMI} can be compared with the cost of a direct solver. The cost of inverting $\Q \in \mathbb{R}^{Km \times Km}$ using the Cholesky factorisation and solving $Km$ linear systems would be $\mathcal{O}(\frac{1}{3}\left(Km\right)^3)+\mathcal{O}(2\left(Km\right)^3) = \mathcal{O}\left(\frac{7}{3} (Km)^3\right)$. Comparing this cost with the leading order term for the IBMI algorithm $\mathcal{O}\left( (\frac{1}{3} + 2K^2) Km^3\right) $, when only one iteration is required, \cref{alg:IBMI} is computationally faster compared with this direct method. For cases where \cref{alg:IBMI} takes more iterations to converge, it may be slower than direct inversion. Finally, we note that if the matrix $\Q$ has additional structure, this could be incorporated in the complexity analysis above. 

\section{Numerical Results} \label{sec:graphs}
Some numerical results to highlight the capabilities of \cref{alg:IBMI} will now be detailed. These experiments were run on a 2023 M3 MacBook Pro with 8-core CPU, 10-core GPU and 16-core Neural Engine, 16GB unified memory and 1TB SSD storage, running macOS 15.1.1, using MATLAB 2024a and OpenBLAS. (Experiments were also run with Apple's Accelerate BLAS and the results were qualitatively similar.) 
\\ \\ 
\renewcommand{\arraystretch}{1.75}
\begin{table}[!b]
\vspace{-0.75cm}
 \caption{Covariance kernels used to generate dense symmetric positive definite covariance matrices.}
    \centering
     \begin{tabular}{|c|c|c|} \hline
  \textbf{Kernel} & \textbf{Covariance Matrix }\\  \hline 
     Exponential &  $ \Q_{EXP}(\b{x},\b{x}')= \exp \left(\frac{-{\| \bf{x}-\bf{x}' \|}}{5}\right)$   \\ \hline
    RBF  &  $\Q_{RBF}(\b{x},\b{x}')= \exp\left(- \frac{\| \bf{x}-\bf{x}' \|^2}{2\sigma^2}\right) $ \\ \hline 
 Inverse Quadratic  & $ \Q_{IQUAD}(\b{x},\b{x}') = \frac{1}{\sqrt{1+ \| \bf{x}-\bf{x}'\|^2}}$   \\ \hline
 Matérn 3/2  & $\Q_{M3/2}(\b{x},\b{x}')= \left( 1 + \frac{\sqrt{3} \|\b{x} -\b{x}'\|}{\tau} \right) \exp\left( - \frac{\sqrt{3} \|\b{x} -\b{x}'\| }{\tau} \right)$ \\ \hline
 Matérn 5/2  & $\Q_{M5/2}(\b{x},\b{x}') = \left( 1 + \frac{\sqrt{5} \|\b{x} -\b{x}'\|}{\tau} + \frac{5 \|\b{x} -\b{x}'\|^2}{3 \tau^2} \right) \exp\left( - \frac{\sqrt{5} \|\b{x} -\b{x}'\|}{\tau} \right)$ \\ \hline
    \end{tabular}
  
    \label{tab:kernels}
\end{table}
Covariance matrices, $\Q \in \mathbb{R}^{p\times p}$, which are dense and guaranteed to be symmetric positive definite, were used for the following numerical results. Five covariance kernels were used to generate covariance matrices, which are presented in \cref{tab:kernels}. These are the exponential kernel (EXP), the radial basis function (RBF) kernel, the inverse quadratic function kernel (IQUAD) and the Matérn 3/2 (M3/2) and Matérn 5/2 (M5/2) kernels. \one{Experiments are presented for both 1D and 2D data.} For 1D data, the values of $x$ and $x'$ used to generate the covariance matrix from the corresponding kernel are equally-spaced values from 0 to $p^{0.9}$. This ensures that the condition number increases moderately with the dimension. 
\one{For the 2D data, regular grids on the unit square are created with equally-spaced values from 0 to  $p^{\frac{0.9}{2}}$, which results in matrices with a larger bandwidth than their 1D counterparts.} 
The error estimate used as the stopping condition in \cref{alg:IBMI} is shown in \cref{eq:error}, with a set tolerance of $10^{-8}$.
\\ \\
\one{The rest of this section presents as follows. First, \cref{sec:dim_vs_iters} investigates the time needed for \cref{alg:IBMI} to converge for both 1D and 2D data. The same covariance matrices are used in \cref{sec:error_vs_iters}, where the number of iterations and final error are tabulated. The rate of convergence is discussed in \cref{sec:num_vs_theory}, and the influence of the partitioning of the covariance matrices for 1D and 2D data is investigated in  \cref{sec:overlap}. We consider the effects of varying the hyper-parameters of the covariance kernels, the choice of initial guess, and the ordering of variables in the covariance matrix on the convergence of \cref{alg:IBMI} in \cref{sec:hyper-params}, \cref{sec:initial_guess} and \cref{sec:red_black} respectively.  Finally, we conclude this section by reviewing the properties which affect the convergence of \cref{alg:IBMI}.}

\begin{figure}[!hp]
\centering 
\subfloat[Exponential Kernel ($\Q_{EXP}$)]{%
  \includegraphics[clip,width=0.75\columnwidth]{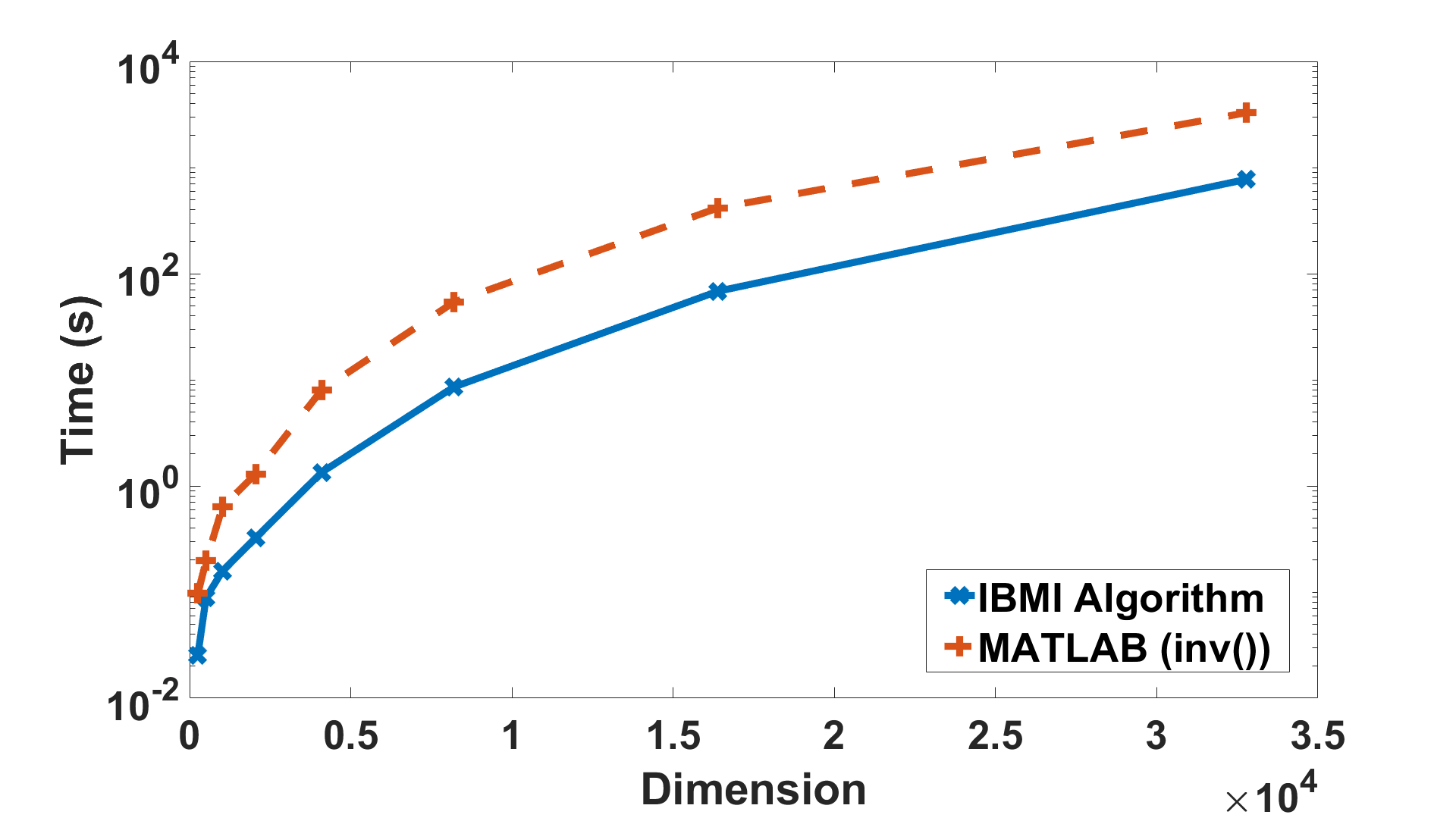}%
}

\subfloat[RBF Kernel ($\Q_{RBF}$)]{%
  \includegraphics[clip,width=0.75\columnwidth]{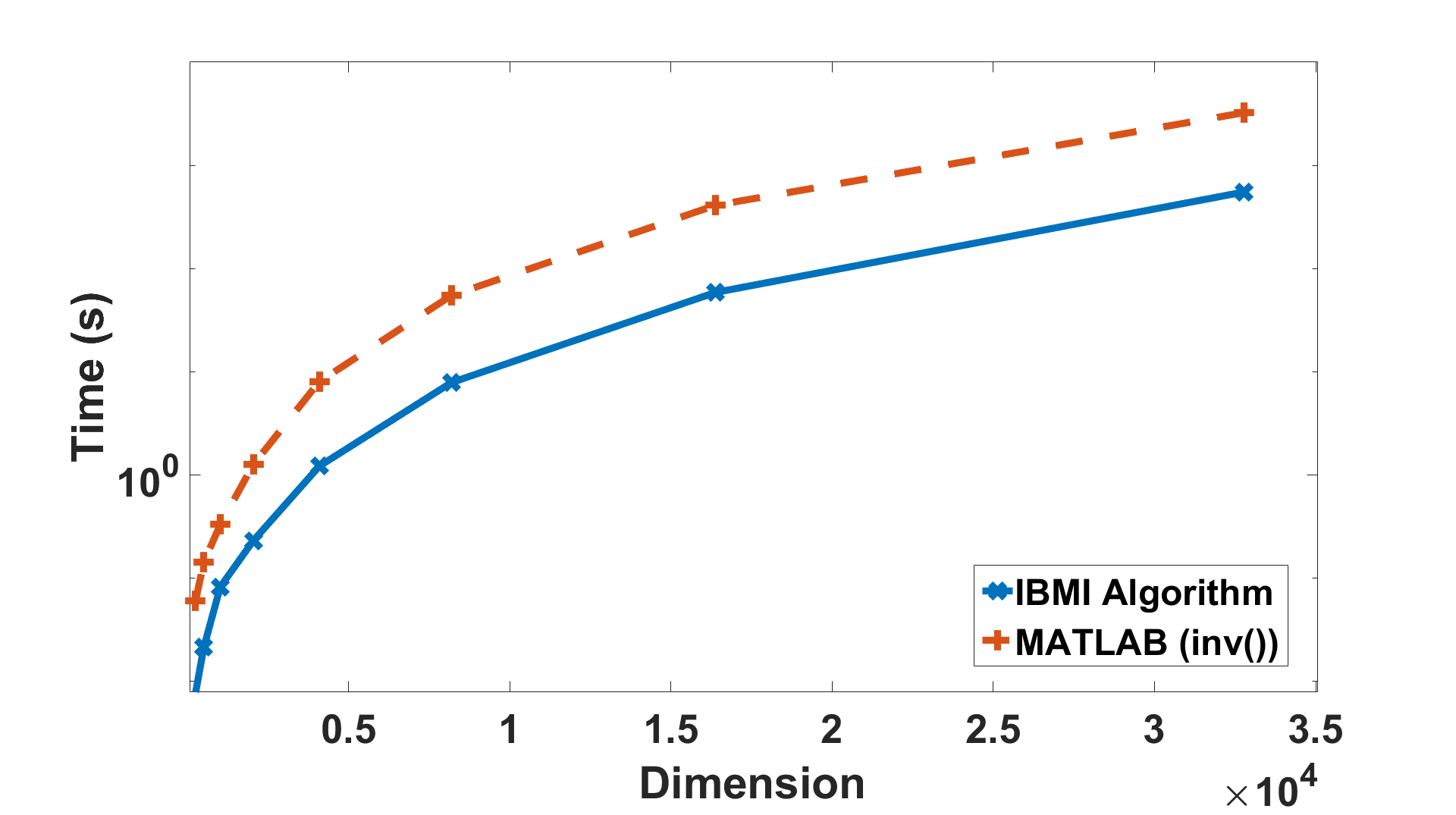}%
}

\subfloat[Inverse Quadratic Kernel ($\Q_{IQUAD}$)]{%
  \includegraphics[clip,width=0.75\columnwidth]{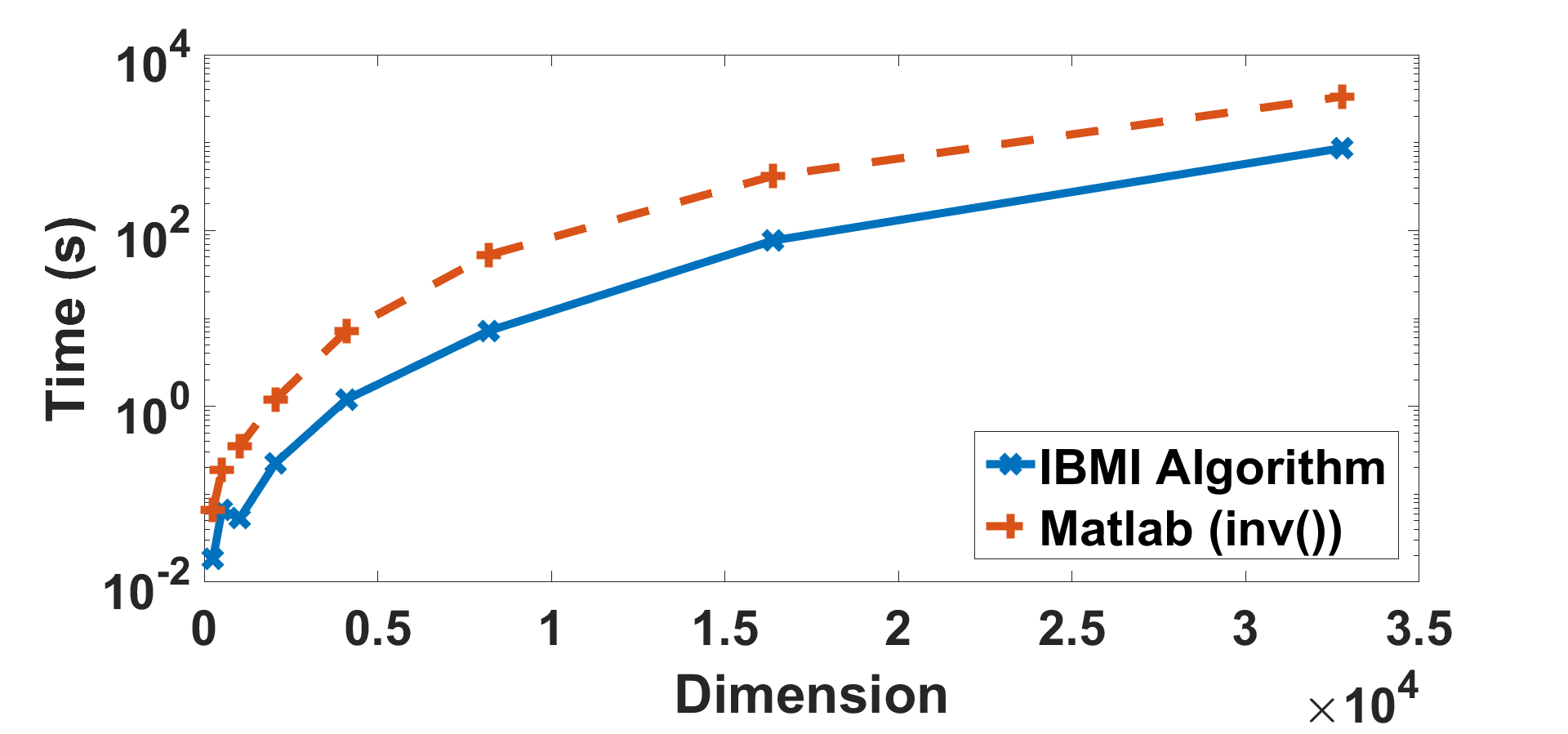}
}

\caption{Dimension of $\Q$ and the time taken for \cref{alg:IBMI} to converge and approximate $\tsig$, compared to the time taken by MATLAB's \texttt{inv()} function to compute $\sig$ \one{for 1D data}.} 
\label{Figure1}
\end{figure} 

\subsection{CPU Time} \label{sec:dim_vs_iters}
We first investigate the time taken for \cref{alg:IBMI} to converge, for different covariance matrices as the dimension $p$, of the matrices increases. Specifically, $p = 2^\ell$, where $\ell = 8,\dotsc, 15$. (Larger covariance matrices could not be stored.) 
The time taken for \cref{alg:IBMI} to approximate the inverse of each covariance matrix, generated by the \one{first three} kernels in \cref{tab:kernels}, was compared with the time taken for MATLAB's inverse function \texttt{inv()} to invert the same matrices. \one{The covariance matrices generated with 1D and 2D data were partitioned into two block rows and columns with a 20\% overlap.}

\subsubsection{1D Data} 
\one{It can be seen in \cref{Figure1} that \cref{alg:IBMI} converges faster for 
covariance matrices generated with 1D data irrespective of dimension, compared to the in-built function \texttt{inv()} for all three covariance kernels. 
For example, when $p=2^{10}$, \cref{alg:IBMI} took 
0.1584, 0.0816, and  0.0531
seconds for the covariance matrices generated by the exponential, RBF, and inverse quadratic kernels. 
On the other hand, MATLAB's inverse function took 
0.6377, 0.3282, and  0.3503
seconds for the same matrices. 
For the largest covariance matrices, \cref{alg:IBMI} converged 
in 772, 550 and 864
seconds for the covariance matrices generated by the exponential, RBF and inverse quadratic kernels, while MATLAB's inverse function took 
3 277 (EXP kernel),
3 234 (RBF kernel), and 3 329 (IQUAD kernel) seconds, respectively.} 

\one{\subsubsection{2D Data} 
For each covariance matrix generated with 2D data, \cref{fig:2D_dim_vs_time} shows that \cref{alg:IBMI} also converges faster than MATLAB's inverse function \texttt{inv}. For example, when $p=2^{10}$, \cref{alg:IBMI} took  
 0.1088, 0.1511,  and 0.1455
seconds for the RBF, exponential and inverse quadratic kernels compared to 
0.3600, 0.2250, and 0.2466
seconds when MATLAB's inverse function was applied to the same covariance matrices.  When $p=2^{14}$, the time taken for \cref{alg:IBMI} to converge was  
59.47, 64.17, and 129.84
seconds for the RBF, exponential and inverse quadratic covariance matrices, while MATLAB's inverse function took 
411.70 (RBF kernel), 402.35 (EXP kernel), and 401.56 (IQUAD)
seconds, respectively.}

\begin{figure}[!h]
\centering 
\subfloat[Exponential Kernel ($\Q_{EXP}$)]{%
  \includegraphics[clip,width=0.75\columnwidth]{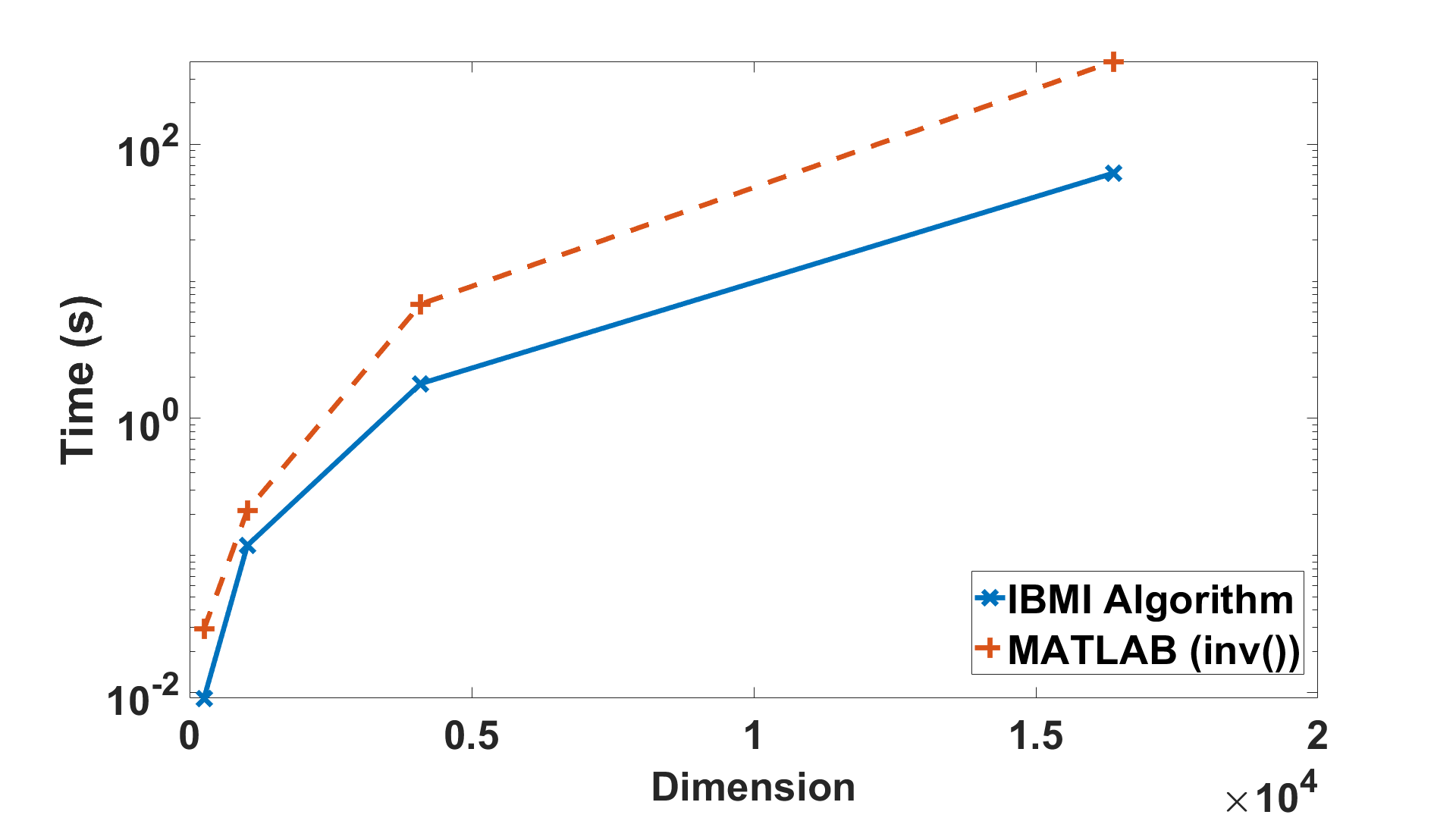}%
}

\subfloat[RBF Kernel ($\Q_{RBF}$)]{%
  \includegraphics[clip,width=0.75\columnwidth]{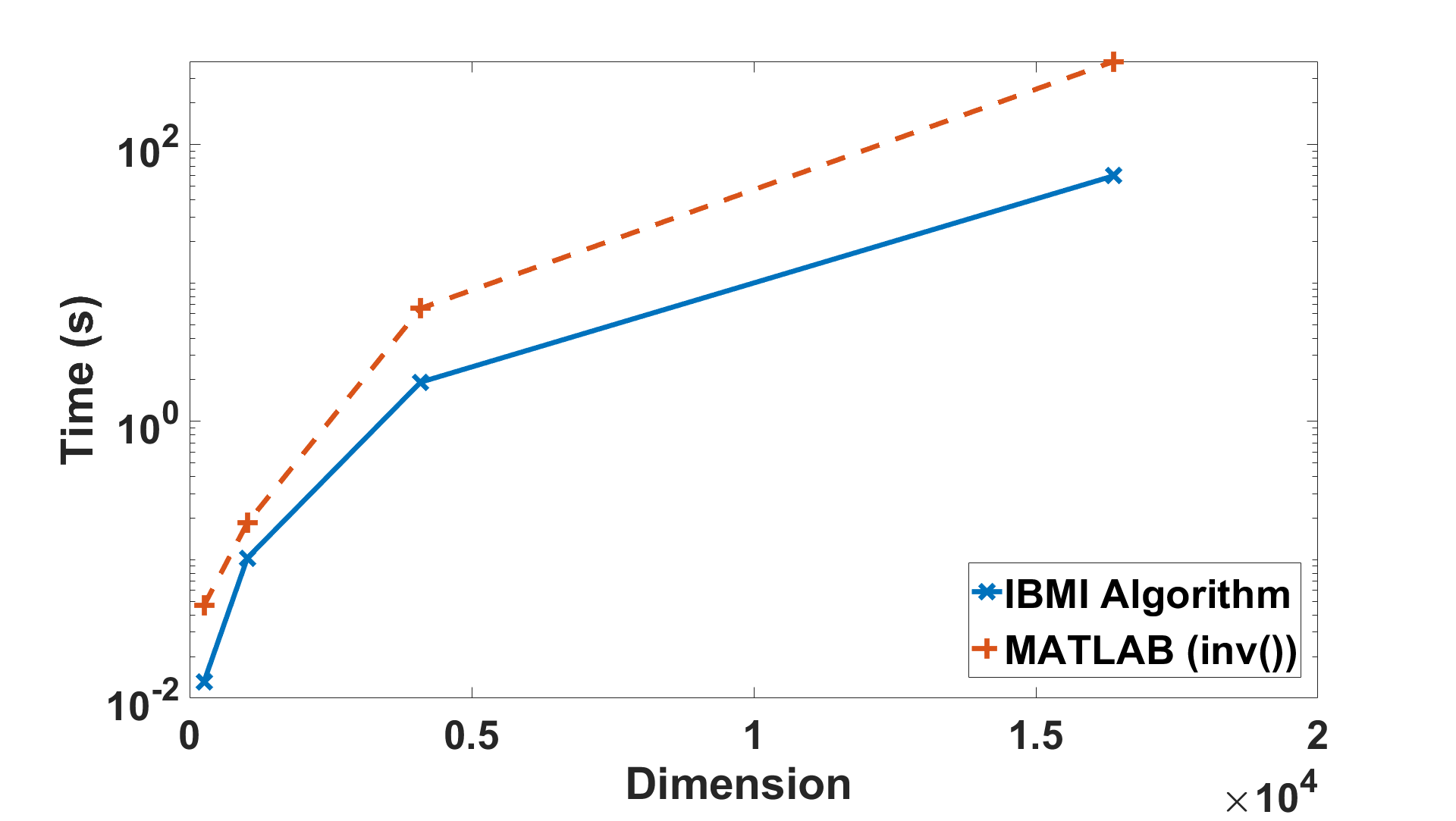}%
}
\vspace{-0.45cm}

\subfloat[Inverse Quadratic Kernel ($\Q_{IQUAD}$)]{%
  \includegraphics[clip,width=0.75\columnwidth]{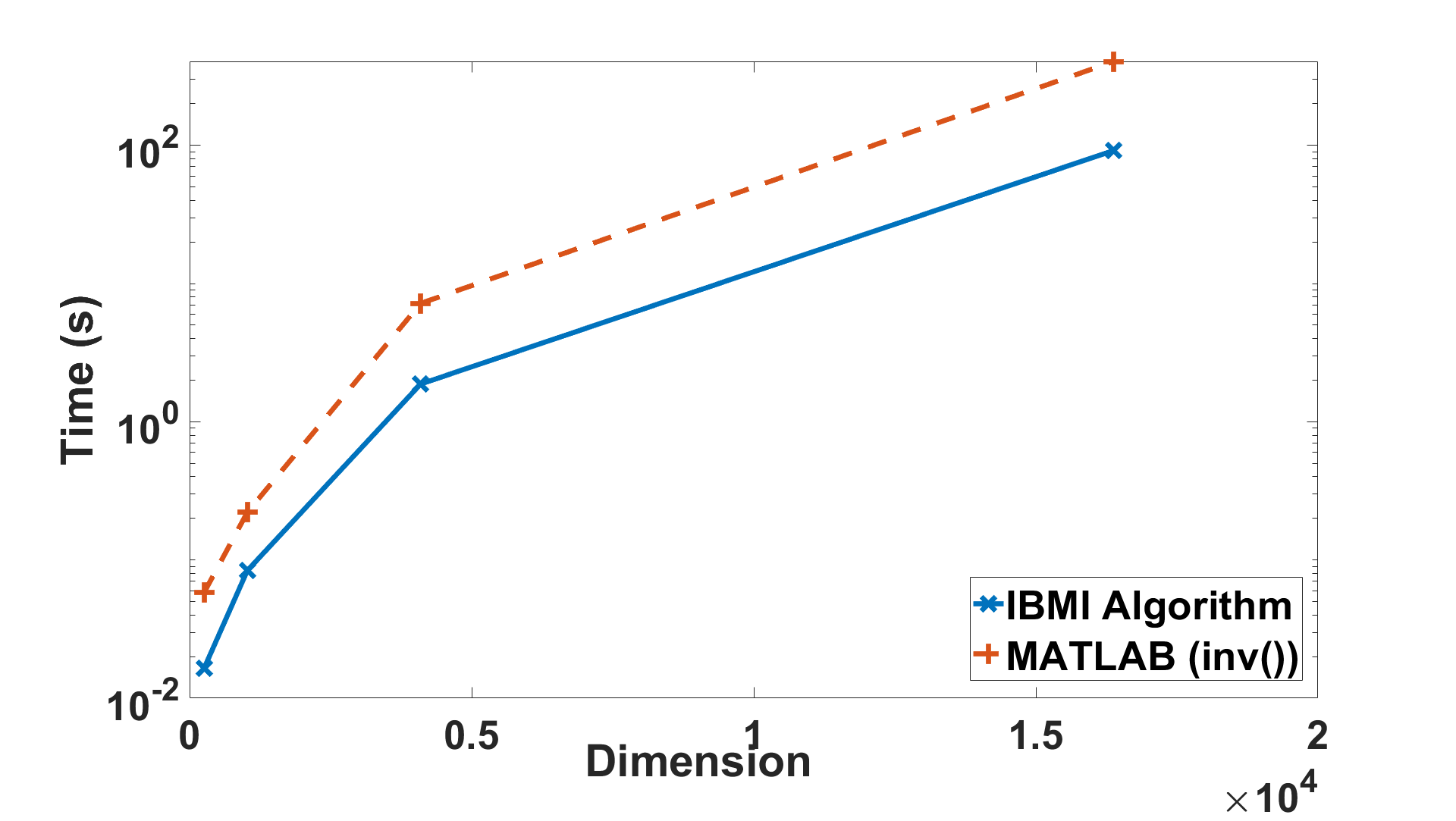}
}

\caption{\one{Dimension of $\Q$ and the time taken for \cref{alg:IBMI} to converge and approximate $\tsig$, compared to the time taken by MATLAB's \texttt{inv()} function to compute $\sig$ for 2D data.}} 
\label{fig:2D_dim_vs_time}
\vspace{-0.8cm}
\end{figure}

\renewcommand{\arraystretch}{1.1}
\begin{table}[b!]
\centering
\vspace{-0.5cm}
\caption{The number of iterations for \cref{alg:IBMI} to converge, and the error in $\tsig$, for matrices generated by the three covariance kernels in \cref{tab:kernels} for 1D and 2D data. } \label{tab:iters vs error}
\begin{tabular}{|l|l|lll|lll|}
\hline
Data                & Dim   & \multicolumn{3}{l|}{Number of Iterations} & \multicolumn{3}{l|}{Error $ \| \tsig - \sig \|_2$} \\ \hline
 &   & EXP  & RBF & IQUAD & EXP & RBF  & IQUAD  \\ \hline
\multirow{8}{*}{1D} & \textbf{$2^8$} & 1           & 1           & 1            & 8.8776e-13     & 2.1237e-15     & 5.4519e-11     \\
& $2^9$          & 1           & 1           & 1            &    2.2002e-12    &  4.5937e-15      & 5.1270e-12      \\
& $2^{10}$       & 1           & 1           & 1            & 1.5159e-12       &  1.4811e-14     & 3.0701e-12     \\
& $2^{11}$       & 1           & 1           & 1            &   2.046e-12    & 4.8692e-14       & 2.7929e-11     \\
& $2^{12}$       & 1           & 1           & 1            &  2.5946e-12     & 1.593e-13      & 1.3058e-10     \\
& $2^{13}$       & 1           & 1           & 1            &     5.5856e-12  & 2.2981e-13      & 3.3384e-10     \\
& $2^{14}$       & 1           & 1           & 1            &    5.6725e-12     &1.6997e-12      &  1.5243e-09     \\
& $2^{15}$       & 1           & 1           & 1            &  4.8877e-12              &    9.5423e-12              & -              \\ \hline
\multirow{4}{*}{2D} & \textbf{$2^8$} & 5  & 2   & 4  &  4.0129e-10   &    1.6277e-13   & 1.3557e-10  \\
 & $2^{10}$       & 2          & 3         & 1    &     5.1995e-11 &     7.3451e-11  &     8.9139e-10  \\
 & $2^{12}$       & 2          & 2         & 1  &   2.8948e-12 &  8.7409e-15     &     2.3953e-12   \\
 & $2^{14}$       & 1          & 1         & 1    &    5.7933e-10    &  2.786e-14   &   8.3444e-12  \\ \hline 
\end{tabular}
\end{table}

\subsection{Error vs Number of Iterations} \label{sec:error_vs_iters}
\cref{tab:iters vs error} displays the number of iterations taken for \cref{alg:IBMI} to converge, and $ \| \tsig - \sig \|_2$ at the last iteration, where $\| \cdot \|_{2}$ is the 2-norm,  as the dimension of the covariance matrix increases. Here, $\tsig$ is the inverse computed using MATLAB's \texttt{inv} function.
The error could not be computed for the covariance matrix of dimension $2^{15}$ generated by the IQUAD kernel, due to memory constraints.
We note that the errors in \cref{tab:iters vs error} differ from the residual-based measure used in the stopping criterion (cf.\ \cref{eq:error}), because $\sig$ is unknown in practice. 

\subsubsection{\two{1D Data}}
\two{All three covariance kernels took only one iteration to converge irrespective of the dimension of the covariance matrix generated with 1D data, as shown in \cref{tab:iters vs error}. The best approximated matrices came from the RBF covariance kernel for smaller dimensions. The covariance matrices produced by the exponential and inverse quadratic covariance kernels also had low errors for smaller covariance matrices. For each covariance kernel, the errors tend to increase with the dimension of the covariance matrix. }



\subsubsection{\two{2D Data}} 
\two{For the covariance matrices generated with 2D data, \cref{tab:iters vs error} shows that \cref{alg:IBMI} takes between one and five iterations to converge, depending on the dimension of the covariance matrix. The number of iterations decreased for the exponential and inverse quadratic kernels as the dimension of the covariance matrices increased. The RBF kernel also saw a decrease the number of iterations for covariance matrices larger than $2^{10}$ in dimension. Here the errors are uniformly small, and do not appear to increase with the dimension. The best approximated matrix generated with 2D data came from the RBF kernel again, when $p=2^{12}$. }

\subsection{Numerical vs theoretical convergence rate} \label{sec:num_vs_theory}
\begin{figure}[b!]
    \centering
    \includegraphics[width=0.8\linewidth]{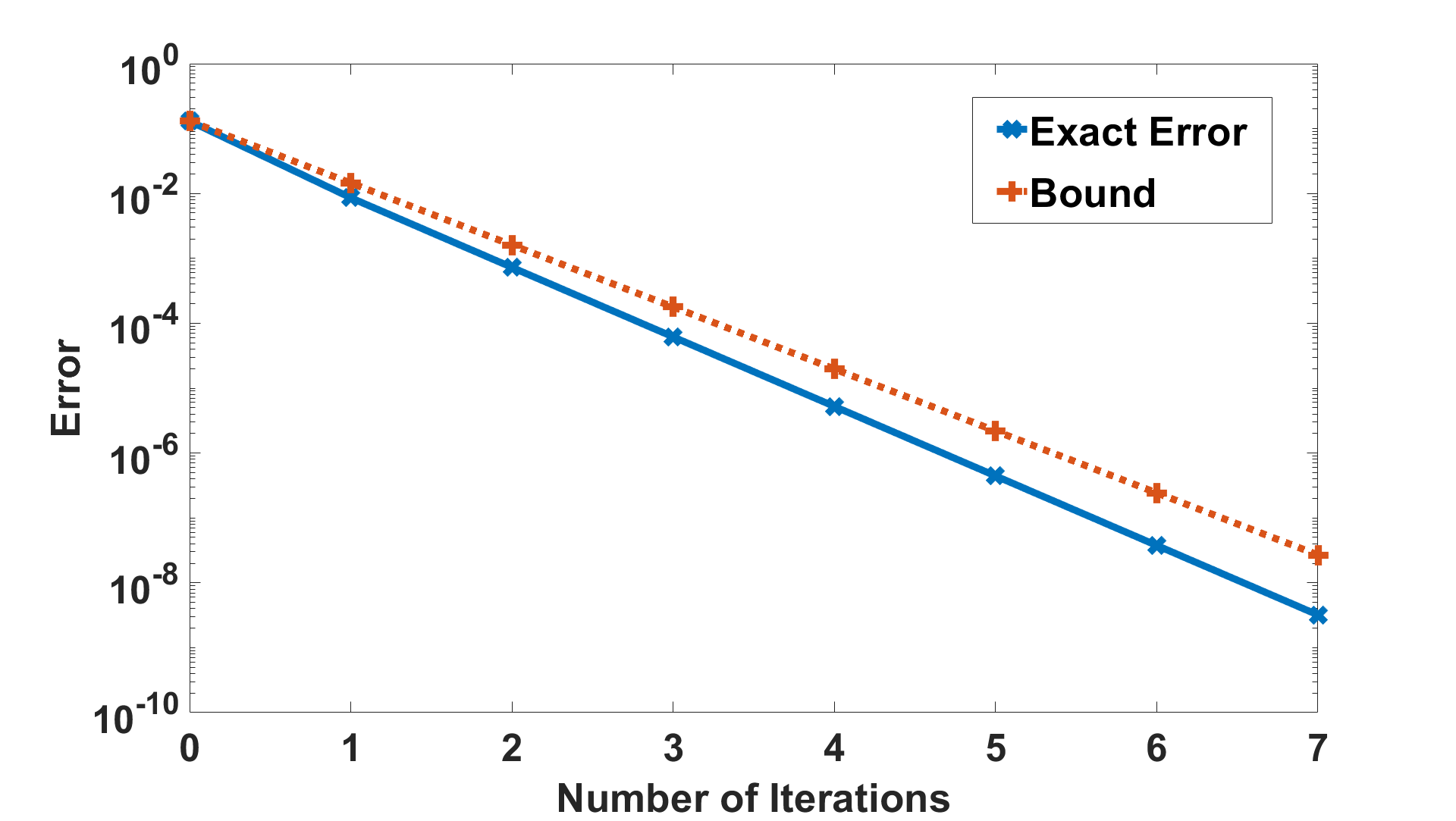}
    \caption{Comparison of the error $\|\tsig_{\I_2}^{(r,2)} - \sig_{\I_2}\|_2$ for \cref{alg:IBMI} with two 
    non-overlapping blocks and the error bound in \cref{lemma} for  $\Q_{RBF}$ of dimension $p = 2^{12}$.} \label{Figure3}
    \vspace{-0.8cm}
\end{figure}
Given any symmetric positive definite matrix, \cref{lemma} guarantees that \cref{alg:IBMI} will converge when $\Q$ is partitioned using two, non-intersecting sets. Moreover, it provides the upper bound \cref{eq:bound} on the error reduction at each iteration. We examine whether this bound is descriptive for a covariance matrix generated using the RBF kernel of dimension $2^{12}$. 
As the number of iterations increases, \cref{Figure3} confirms that the actual error decreases linearly, similarly to the upper bound \cref{eq:bound}. The convergence rate is better, but fairly similar to, the rate of 0.333 predicted by the bound, indicating that the bound is reasonably descriptive in this case. 
\one{Although we do not have theoretical convergence bounds for other partitionings, we investigate the convergence rate of \cref{fig:conv_rate} of a multi-block, non-overlapping partitioning in \cref{fig:conv_rate}. 
For all dimensions of $\Q_{RBF}$ the error decreases linearly as the number of iterations increases. However, the error decreases at a slower rate as the dimension of  $\Q_{RBF} $ increases. }

\begin{figure}[t!]
\vspace{-0.2cm}
    \centering 
    \includegraphics[width=0.8\linewidth]{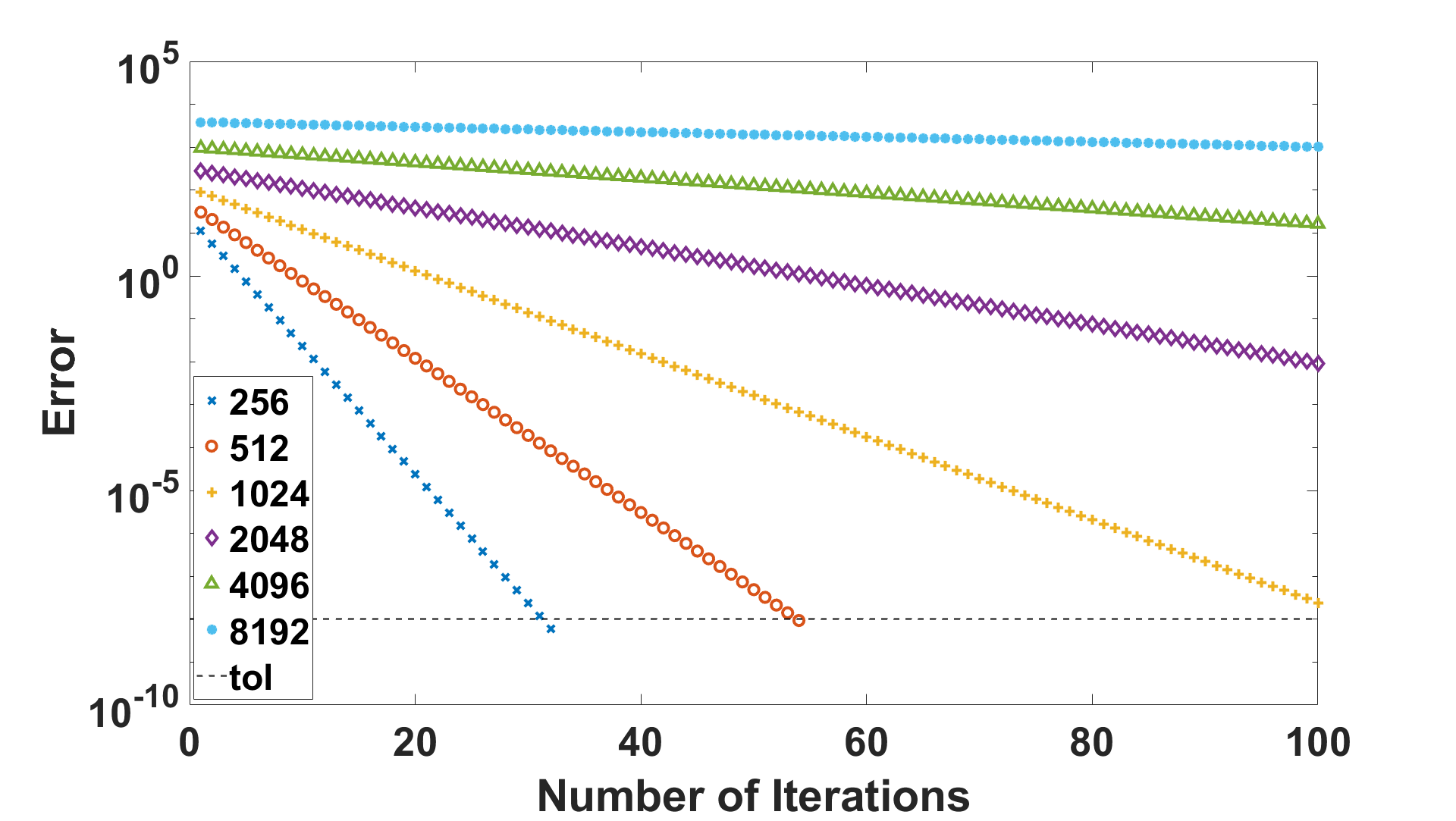}
    \caption{Comparison of the error $\|\tsig_{\I_2}^{(r,2)} - \sig_{\I_2}\|_2$ for \cref{alg:IBMI} with four 
    non-overlapping blocks for $\Q_{RBF}$ of dimension $p = 2^{\ell},$ where $\ell=8,\ldots,14$. \label{fig:conv_rate}}
    \label{fig:placehold}
    \vspace{-0.3cm}
\end{figure}

\subsection{Influence of the Partitioning on the Convergence\label{sec:overlap}}

The partitioning of the covariance matrix $\Q$ can greatly affect the convergence rate of \cref{alg:IBMI}.  \cref{lemma} details how  \cref{alg:IBMI} will converge for the two-block non-overlapping partitioning, given any symmetric positive definite matrix $\Q$. Here some numerical results are displayed which suggests that the multi-block partitioning with overlap will converge faster than the two-block partitioning. \one{Both 1D and 2D} covariance matrices $\Q$, of dimension $2^{12}$ were generated using the RBF covariance kernel. When partitioning the matrix for \cref{alg:IBMI}, the number of blocks was varied between 2 and 6, while the amount of overlap varied between 0\% and 20\%. The effect on the time taken for \cref{alg:IBMI} to converge, and the number of iterations required, was then recorded.

\renewcommand{\arraystretch}{1.4}
\newcommand*\rot{\rotatebox{90}}
\begin{table}[b!]
\vspace{-0.5cm}
\centering
\caption{Time taken for \cref{alg:IBMI} to \two{converge} in seconds by altering the number of blocks and overlap between the blocks, when partitioning a covariance matrix $\Q_{RBF}$ of dimension $p=2^{12}$ generated with {1D data}.} \label{tab:blocks & overlap}
\begin{tabular}{ll|lllll}
\multicolumn{2}{c|}{}   & \multicolumn{5}{c}{Overlap Fraction}  \\  
 &   & 0.00   & 0.05 & 0.10    & 0.15   & 0.20 \\ \hline
\multirow{5}{*}{\rot{Number of Blocks}}  & 2 & 323.180 & 1.1419 & 1.1054 & 1.1807 & 1.0588 \\
 & 3 & 398.898 & 1.1575 & 1.1486 & 1.1673 & 1.1606 \\
& 4 & 401.556 & 1.1791 & 1.1724 & 1.2099 & 1.2433 \\
& 5 & 469.355 & 1.2222 & 1.2472 & 1.2666 & 1.315  \\
& 6 & 487.718 & 1.3113 & 1.2535 & 1.2749 & 1.3823 \\ \hline
\multicolumn{2}{l|}{Iters} & 476 & 1  & 1      & 1 & 1 
\end{tabular}
\vspace{-0.5cm}
\end{table}

\subsubsection{1D Data}
The results for the 1D problem are given in \cref{tab:blocks & overlap}. 
Note that the number of iterations was independent of the number of blocks, $K$, within the tested range of $K=2,\dotsc, 6$.
\cref{tab:blocks & overlap} illustrates how even a small amount of overlap greatly decreased the number of iterations, and hence time, for \cref{alg:IBMI} to converge. When non-overlapping blocks were used,  \cref{alg:IBMI} took 476 iterations to converge, taking between 323 seconds (for two blocks) and 488 seconds (for six blocks). However, by introducing only a  5\% overlap, the algorithm converged in 1 iteration and between 1.14 and 1.31 seconds.  
\\ \\ 
When no overlap is used, it was quicker to use a two block partitioning with Algorithm \eqref{alg:IBMI}. 
When overlap was introduced, only one iteration was required and the timings were very similar for all choices of $K$. 
The time for \cref{alg:IBMI} to converge increased slightly with the number of blocks and the overlap fraction and, for this particular matrix, 
the smallest time was achieved for two blocks and a 10\% overlap.
However, the variation in timings for the overlapping cases was small, indicating that the algorithm is fairly insensitive to the number of blocks in the partitioning, and the amount of overlap. Although our default choices in other experiments are $K=4$ blocks and an overlap of 5\%, results are fairly similar for other partitionings. 
\begin{table}[b!]
\centering
\caption{ Time taken for \cref{alg:IBMI} to \two{converge} in seconds and iterations (in parentheses) by altering the number of blocks and overlap between the blocks, when partitioning a covariance matrix $\Q_{RBF}$ of dimension $p=2^{12}$ generated with {2D data}.\label{tab:2Dblocks & overlap}}
\begin{tabular}{ll|lllll}
&  & \multicolumn{5}{c}{Overlap Fraction}                   \\ \cline{3-7} 
                &         & 0.00          & 0.05        & 0.10        & 0.15        & 0.20        \\ \hline
\multirow{5}{*}{\rot{Number of Blocks}} & 2 & 23.016 (33)   & 7.887 (11)  & 3.608 (5)   & 2.3125 (3)  & 1.662 (2) \\
&3 & 222.18 (257) & 11.606 (13) & 6.224 (7)   & 4.579 (5)  & 2.828 (3)  \\
&  4 & 29.472 (33)   & 28.658 (32) & 9.887 (11)  & 5.714 (6)  & 4.702 (5)   \\
&5 & 251.16 (257) & 32.121 (33) & 10.904 (11) & 10.709 (11) $\! \!$ & 6.239 (6)  \\
 & 6 & 264.14 (257) & 33.662 (33) & 13.435 (13) & 11.345 (11) $\! \!$& 7.365 (7)  \\ \hline
\end{tabular}
\end{table}

\subsubsection{\two{2D Data}}
\two{For the 2D problem, the number of iterations was not independent of the number of blocks (see \cref{tab:2Dblocks & overlap}). 
However, similarly to the 1D problem, when no overlap was used, it was quicker to use a two block partitioning. When overlap was introduced, the time taken for \cref{alg:IBMI} to converge, and the number of iterations decreased as the overlap increased, and the fastest time of 1.662 seconds was achieved when $\Q_{RBF}$ was partitioned into two block rows/columns with a $20\%$ overlap.} 
\\ \\ 
\two{Overall, \cref{tab:blocks & overlap} and \cref{tab:2Dblocks & overlap} highlight how introducing overlap appears to be more effective than optimising the number of blocks when partitioning the covariance matrix to achieve faster convergence for \cref{alg:IBMI}. 
For matrices with a larger bandwidth such as those generated with 2D data, it appears that using a two-block partitioning with minimum 20$\%$ overlap can be the most effective partitioning for \cref{alg:IBMI} to converge. However, by increasing the amount of overlap with two blocks, the sub-matrix $\Q_{\I}^{-1}$ can become computationally expensive to calculate, so introducing more blocks may be needed as the dimension of $\Q$ increases. }

\subsection{\two{Varying Covariance Kernel Hyper-parameters}}\label{sec:hyper-params} \two{
We now investigate the effect of covariance kernel hyper-parameters on the convergence of 
\cref{alg:IBMI} for covariance matrices generated using the RBF and the Matérn 3/2 kernels in \cref{tab:kernels}. In both cases the matrices were of dimension $2^{12}$ and were partitioned into four blocks with a 5\% overlap.}
\begin{table}[t!]
\vspace{-0.5cm}
\caption{\two{Varying hyper-parameters $\sigma$ and $\tau$ of the RBF and Matérn 3/2 covariance kernels respectively to see how this affects the convergence of \cref{alg:IBMI} and the condition number of $\Q$.} \label{tab:hyperparameters}}
\begin{tabular}{lccccl}
\hline
\multicolumn{1}{|l|}{Kernel}                                                                 & \multicolumn{1}{l|}{$\sigma$} & \multicolumn{1}{l|}{No of Iterations} & \multicolumn{1}{l|}{Time (seconds)} & \multicolumn{1}{l|}{Error} & \multicolumn{1}{l|}{Cond(A)}    \\ \hline
\multicolumn{1}{|l|}{\multirow{5}{*}{RBF}}                                                   & 0.3                           & 1                                     & 1.3854                              & 2.4118e-16                 & \multicolumn{1}{l|}{5.2071}     \\
\multicolumn{1}{|l|}{}                                                                       & 0.5                           & 1                                     & 1.2875                              & 9.8146e-15                 & \multicolumn{1}{l|}{335.3515}   \\
\multicolumn{1}{|l|}{}                                                                       & 0.7                           & 1                                     & 1.2450                              & 8.0230e-11                 & \multicolumn{1}{l|}{1.7337e+05} \\
\multicolumn{1}{|l|}{}                                                                       & 0.9                           & 500 (max)                             & 475.0415                            & 2.2618e-04                 & \multicolumn{1}{l|}{7.1930e+08} \\
\multicolumn{1}{|l|}{}                                                                       & 1.1                           & 500 (max)                             & 476.7224                            & 2.7833e+06                 & \multicolumn{1}{l|}{2.3942e+13} \\ \hline
\multicolumn{1}{c}{}                                                                         &                               &                                       &                                     &                            & \multicolumn{1}{c}{}            \\ \hline
\multicolumn{1}{|c|}{Kernel}                                                                 & \multicolumn{1}{l|}{$\tau$}   & \multicolumn{1}{l|}{No of Iterations} & \multicolumn{1}{l|}{Time (seconds)} & \multicolumn{1}{l|}{Error} & \multicolumn{1}{l|}{Cond(A)}    \\ \hline
\multicolumn{1}{|l|}{\multirow{5}{*}{\begin{tabular}[c]{@{}l@{}}Matérn \\ 3/2\end{tabular}}} & 3                             & 1                                     & 1.2552                              & 6.0534e-13                 & \multicolumn{1}{l|}{1.275e+04}  \\
\multicolumn{1}{|l|}{}                                                                       & 6                             & 1                                     & 1.2064                              & 1.6069e-11                 & \multicolumn{1}{l|}{1.9296e+05} \\
\multicolumn{1}{|l|}{}                                                                       & 9                             & 1                                     & 1.1891                              & 9.3210e-10                 & \multicolumn{1}{l|}{9.7505e+05} \\
\multicolumn{1}{|l|}{}                                                                       & 12                            & 1                                     & 1.2119                              & 6.3821e-10                 & \multicolumn{1}{l|}{3.0794e+06} \\
\multicolumn{1}{|l|}{}                                                                       & 15                            & 500 (max)                             & 474.7873                            & 1.0884e-08                 & \multicolumn{1}{l|}{7.5146e+06} \\ \hline
\end{tabular}
\end{table}
\\ \\
\two{
We first consider the covariance matrices generated by the RBF kernel. The length scale parameter $\sigma$  varied 0.3 and 1.1; for larger values of $\sigma$, $\Q$ was numerically singular.  
We see from \cref{tab:hyperparameters} that for $\sigma \le 0.7$, \cref{alg:IBMI} converges in a single iteration, in a time of just over 1 second. For larger values of $\sigma$, however, \cref{alg:IBMI} failed to converge within 500 iterations. This indicates that the conditioning of the matrix may affect the convergence rate of the IBMI algorithm.} 
\\
\\
\two{
\cref{tab:hyperparameters} also shows results for the  Matérn 3/2 kernel when the length scale parameter $\tau$ varied between 3 and 15. For $\tau < 15$, \cref{alg:IBMI} converged in just over one second, and in one iteration. For larger $\tau$ values, \cref{alg:IBMI} did not converge within 500 iterations, again indicating that the conditioning of the matrix can affect the algorithm's convergence rate. However, increasing the amount of overlap, reducing the number of blocks can improve performance (cf. \cref{sec:overlap}), as can reducing the tolerance in applications in which a less accurate approximation is acceptable.} 

\subsection{\two{Initial Guess}\label{sec:initial_guess}}
\two{In the previous experiments, the initial guess $\tsig_{\I_1^c}^{(0,1)}$ in \cref{alg:IBMI} was the identity matrix. 
We now investigate whether the initial guess has a significant impact on the convergence rate of \cref{alg:IBMI}. Four different initial guesses were considered: the identity matrix, $\Q_{\I^c}^{-1}$, the Monte Carlo estimator from \cref{MC Estimators} and the Takahashi equations \cref{eq:taka1} to obtain an approximation of  the inverse Schur complement $\sig_{\I^c}^{-1}$ to use as the initial guess. We applied \cref{alg:IBMI} with these initial guesses for covariance matrices of size $p=2^{12}$, generated by the exponential, RBF, inverse quadratic and Matérn 5/2 kernels. These matrices were partitioned into two non-overlapping block rows and columns. }
\\ \\
\two{The numbers of iterations required for \cref{alg:IBMI} to converge are shown in \cref{tab:initial_guess}. We see that the initial guess has very little impact on the number of iterations needed in this case, with the possible exception of the Matérn 5/2 kernel, for which 4 more iterations were needed when the identity matrix was used than for any other initial guess. 
It does not appear that one initial guess stands out as optimal when looking at the number of iterations it takes for  \cref{alg:IBMI} to converge. We continue to use the identity matrix as the initial guess, as it is less computationally expensive to generate than the other three options.}
\\ \\ 
\one{For the above hyper-parameter and initial guess experiments, the matrices were partitioned using four blocks with a 5\% overlap and two blocks with no overlap respectively. One way to improve the convergence of \cref{alg:IBMI} with these methods would be to add more overlap and reduce the number of blocks if memory permits as suggested by \cref{sec:overlap}. }

\begin{table}[b!]
\centering
\vspace{-0.75cm}
\caption{\two{The effect of the initial guess $\tsig_{\I^c}$ on the convergence rate of \cref{alg:IBMI} for covariance matrices of dimension $p=2^{12}$ formed from various kernels.}} \label{tab:initial_guess}
\begin{tabular}{l|cccc|}
\cline{2-5}
& \multicolumn{4}{c|}{Number of Iterations } \\ \hline
\multicolumn{1}{|l|}{Initial Guess $\backslash$ Kernel}    & EXP  & RBF  & IQUAD & M 5/2 \\ \hline
\multicolumn{1}{|l|}{Identity Matrix}  & 42   & 28   & 24    & 11    \\
\multicolumn{1}{|l|}{$\Q^{-1}_{\I^c}$} & 41   & 27   & 23    & 7    \\
\multicolumn{1}{|l|}{MC Estimator}     & 42   & 28   & 24    & 7    \\
\multicolumn{1}{|l|}{Takahashi}        & 41   & 27   & 23    & 7    \\ \hline
\end{tabular}
\vspace{-0.5cm}
\end{table}

\begin{figure}[t!]
    \centering  
    \includegraphics[width=0.8\linewidth]{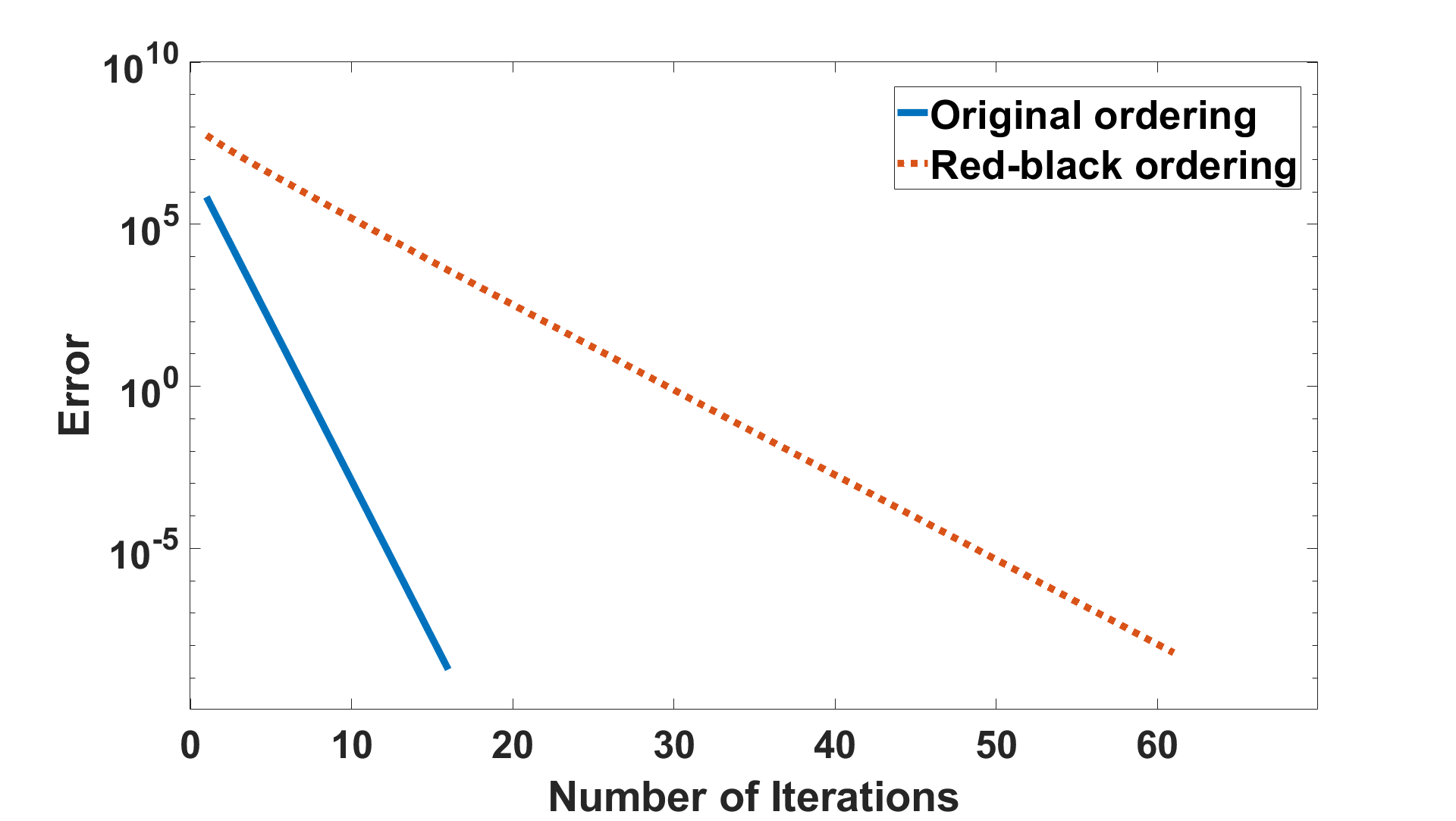}
    \caption{
    \two{Comparison of the error for different choices of the index sets $\I_1$ and $\I_2$ when the covariance matrix $\Q_{M5/2}$ of dimension $p=2^{12}$ is partitioned into two non-overlapping block rows/columns. The original (contiguous) ordering is compared to a red-black ordering. }   
    }
    \label{fig:red-black}
    \vspace{-0.75cm}
\end{figure}
\subsection{\two{Re-ordering}\label{sec:red_black}}
\two{We now assess whether the ordering of the rows and columns of $\Q\in\mathbb{R}^{p\times p}$ into the index sets $\I_k$ for $k=1,\ldots K$ changes the convergence rate of \cref{alg:IBMI}. By default, the index sets $\I_k$ are made from contiguous integers, e.g., in the case of two non-overlapping blocks of equal size, and with the dimension $p$ an even integer, we set $\I_1 = \{1,\dotsc, \frac{p}{2}\}$ and $\I_2 = \{\frac{p}{2}+1,\dotsc, p\}$.  
Here, for covariance matrices of dimension $p=2^{12}$, we instead partition $\Q$ with a red-black ordering, so that the index set $\I_1$ contains only odd indices and $\I_2$ contains only even indices. }
\\ \\ 
\two{For all five kernels in \cref{tab:kernels}, \cref{alg:IBMI} converged faster when the original (contiguous) index sets were used than when the red-black indexing was applied. Indeed, for every kernel except the Matérn 5/2 kernel, \cref{alg:IBMI} did not converge within 500 iterations with this alternative indexing. The convergence rates for the  Matérn 5/2 kernel are given in \cref{fig:red-black}, from which we see that with the red-black indexing \cref{alg:IBMI} took 61 iterations to converge to a final error of 5.9844e-09. When the original indexing was used, 16 iterations were required to obtain an error of 1.8197e-09. }
\\ \\ 
\two{
These results seem to suggest that, similarly to direct methods, it can be helpful to reorder the rows and columns of $\Q$ to obtain a more diagonally-dominant matrix with a smaller bandwidth. We note that other choices of indexing could potentially have a different effect on the convergence of \cref{alg:IBMI} but this has not been rigorously tested. However, all indices must be in at least one set $\I_k$, $k = 1,\dotsc K$ to guarantee convergence of \cref{alg:IBMI}; indeed, if even one is removed the method may fail to converge.  }





\subsection{\two{Properties Affecting Convergence}\label{sec:properties_conv}} 
\two{The above numerical experiments indicate that there are many factors that can affect the rate of convergence of \cref{alg:IBMI}, which we now discuss. Firstly, we saw in \cref{sec:dim_vs_iters} that \cref{alg:IBMI} usually converged faster for covariance matrices generated with 1D data than for those generated from 2D data. The 1D matrices have a smaller bandwidth and their elements decay faster away from the main diagonal than their 2D counterparts. The effect of these properties on the convergence rate can be most readily understood for the case of two non-overlapping blocks, since  \cref{lemma} indicates that convergence should be faster when off-diagonal blocks are smaller in norm. Numerical evidence also suggests that this is also true for multi-block and overlapping partitionings. 
Therefore, we postulate that improving diagonal dominance, when this is possible, can accelerate the convergence rate of \cref{alg:IBMI}}. 
\\ \\
\two {
\Cref{sec:hyper-params} shows that \cref{alg:IBMI} takes longer to converge as the condition number of $\Q$ increases. Improving the conditioning, e.g., through scaling, is therefore recommended when possible. 
However, it appears from \cref{sec:initial_guess} that the choice of initial guess is less important. That said, if a good approximation to the Schur complement is readily available, it could still be beneficial to use it.
}
\\ \\
\two{
The partitioning of the rows/columns of $\Q$ into sets had a significant impact on the performance of \cref{alg:IBMI}. We first saw in  \cref{sec:overlap} that introducing overlap is imperative for fast convergence, and that even a small amount of overlap can make a significant difference to the running time of the algorithm. Optimising the number of blocks can also improve performance, although less markedly. For covariance matrices with a larger bandwidth, the optimal partitioning appears to be into two blocks  with a decent overlap, e.g., $20\%$. Additionally, \cref{sec:red_black} showed that the ordering of the index sets $\I_k$ for $k=1, \ldots K$ is important.   Given that red-black ordering destroyed the fast decay of elements away from the diagonal and caused slower convergence, care should be exercised when choosing the sets $\I_k$, $k = 1,\dotsc, K$. We recommend choosing sets to maximise the `weight' of elements near the diagonal. 
}


\section{Discussion} \label{sec:conclusions}
In this paper, we have presented a novel iterative block matrix inversion algorithm which can accurately and efficiently approximate the inverse of a dense symmetric positive definite matrix. The IBMI algorithm serves as a way to approximate the off-diagonal elements of the inverse of a symmetric positive definite matrix, which is a known limitation for current literature. When $\Q$ is partitioned into two non-intersecting sets, \cref{alg:IBMI} will always converge, as shown in \cref{lemma}. Numerical results indicate that the multi-block partitioning with overlap accelerates the convergence of \cref{alg:IBMI}. Moreover, \cref{alg:IBMI} outperforms MATLAB's built-in inverse function, \texttt{inv()} in terms of time and computational complexity for the large dense matrices examined in \cref{sec:graphs}. \\ \\ 
\cref{alg:IBMI} is generally applicable to any symmetric positive definite matrix, without any additional constraints such as converting $\Q$ into a hierarchical low rank matrix and therefore, has the potential to assist with a wide range of modern problems within data science, machine learning and multivariate statistics. One application which could benefit significantly is Gaussian process regression (GPR), as both the covariance matrix and its inverse (the precision matrix) are needed for prediction and uncertainty quantification. For high-dimensional data sets, directly inverting the covariance matrix to derive the posterior predictive equations can become computational infeasible. \cref{alg:IBMI} could offer a potential solution for obtaining the inverse, allowing GPR to be applied to these problems. 
Furthermore, the IBMI algorithm could potentially be altered to approximate block diagonal sub-matrices of, $\tsig$ rather than the full matrix. This partial approximation may be beneficial to methods where only a subset of the full inverse is required.  

\section*{Acknowledgments}
We would like to acknowledge Professors Finn Lindgren and John Pearson for helpful discussions. We would also like to thank the anonymous referees for their helpful comments and suggestions. 

\bibliographystyle{siamplain}
\bibliography{references}
\end{document}


\maketitle

\section{A detailed example}

Here we include some equations and theorem-like environments to show
how these are labelled in a supplement and can be referenced from the
main text.
Consider the following equation:
\begin{equation}
  \label{eq:suppa}
  a^2 + b^2 = c^2.
\end{equation}
You can also reference equations such as \cref{eq:matrices,eq:bb} 
from the main article in this supplement.

\lipsum[100-101]

\begin{theorem}
  An example theorem.
\end{theorem}

\lipsum[102]
 
\begin{lemma}
  An example lemma.
\end{lemma}

\lipsum[103-105]

Here is an example citation: \cite{KoMa14}.

\section[Proof of Thm]{Proof of \cref{thm:bigthm}}
\label{sec:proof}
\lipsum[106-112]

\section{Additional experimental results}
\Cref{tab:foo} shows additional
supporting evidence. 

\begin{table}[htbp]
{\footnotesize
  \caption{Example table}  \label{tab:foo}
\begin{center}
  \begin{tabular}{|c|c|c|} \hline
   Species & \bf Mean & \bf Std.~Dev. \\ \hline
    1 & 3.4 & 1.2 \\
    2 & 5.4 & 0.6 \\ \hline
  \end{tabular}
\end{center}
}
\end{table}

\bibliographystyle{siamplain}
\bibliography{references}